\documentclass[reqno]{amsart}

\usepackage{amsmath,amssymb,amsthm,enumerate,mathrsfs}

\usepackage{hyperref}

\usepackage{color}

\sloppy


\numberwithin{equation}{section}

\theoremstyle{plain}
\newtheorem{Thm}{Theorem}[section]
\newtheorem{Prop}[Thm]{Proposition}
\newtheorem{Lem}[Thm]{Lemma}
\newtheorem{Cor}[Thm]{Corollary}

\theoremstyle{definition}
\newtheorem{Def}[Thm]{Definition}
\newtheorem{Expl}[Thm]{Example}

\theoremstyle{remark}
\newtheorem*{Rem}{Remark}




\renewcommand{\rho}{\varrho}

\hyphenation{Lip-schitz ho-meo-mor-phic}

\title[The Cartan-Hadamard Theorem]{
The Cartan-Hadamard Theorem for Metric Spaces with Local Geodesic Bicombings
}

\author{
Benjamin Miesch}
\address{Department of Mathematics, ETH Z\"urich, 8092 Z\"urich, Switzerland}
\email{benjamin.miesch@math.ethz.ch}

\date{\today}

\begin{document}


\begin{abstract}
	Local-to-global principles are spread all-around in mathematics. The classical Cartan-Hadamard Theorem from Riemannian geometry was generalized by W. Ballmann for metric spaces with non-positive curvature, and by S. Alexander and R. Bishop for locally convex metric spaces.

In this paper, we prove the Cartan-Hadamard Theorem in a more general setting, namely for spaces which are not uniquely geodesic but locally possess a suitable selection of geodesics, a so-called convex geodesic bicombing.

	Furthermore, we deduce a local-to-global theorem for injective (or hyperconvex) metric spaces, saying that under certain conditions a complete, simply-connected, locally injective metric space is injective. A related result for absolute $1$-Lipschitz retracts follows.
\end{abstract}

\maketitle


\section{Introduction}\label{introduction}

	
	In a metric space $X$, a \emph{geodesic bicombing} is a selection of a geodesic between each pair of points. This is a map $\sigma \colon X \times X \times [0,1] \to X$ such that, for all $x,y \in X$, the path $\sigma_{xy}:=\sigma(x,y,\cdot)$ is a geodesic from $x$ to $y$. Moreover, we assume that this choice is consistent in the sense that $\sigma_{pq}([0,1]) \subset \sigma_{xy}([0,1])$ for all $p,q \in \sigma_{xy}([0,1])$ with $d(x,p) \leq d(x,q)$.
	A geodesic bicombing $\sigma$ is called \emph{convex} if the function $t \mapsto d(\sigma_{xy}(t),\sigma_{\bar{x}\bar{y}}(t))$ is convex for all $x,y,\bar{x},\bar{y} \in X$. Furthermore, we say that $\sigma$ is \emph{reversible} if $\sigma_{yx}([0,1]) = \sigma_{xy}([0,1])$ for all $x,y \in X$.
	
	Spaces with convex geodesic bicombings were studied extensively by D. Descombes and U. Lang in \cite{Des3,Des,Des2} and also by G. Basso in \cite{Bas}, where they show that several results for CAT(0) and Busemann spaces carry over to spaces with convex geodesic bicombings. Here we will contribute to these studies by proving the following Cartan-Hadamard Theorem.	

\begin{Thm}\label{thm:convex bicombing}
	Let $X$ be a complete, simply-connected metric space with a convex local geodesic bicombing $\sigma$. Then the induced length metric on $X$ admits a unique convex geodesic bicombing $\tilde{\sigma}$ which is consistent with $\sigma$. As a consequence, $X$ is contractible. Moreover, if the local geodesic bicombing $\sigma$ is reversible, then $\tilde{\sigma}$ is reversible as well.
\end{Thm}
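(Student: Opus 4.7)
The plan is to pass to the induced length metric $\tilde{d}$ on $X$ and build $\tilde{\sigma}$ as the uniform limit of a ``straightening'' iteration applied to any rectifiable path between the given endpoints; simply-connectedness then ensures the limit is independent of the path chosen.

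First, I would verify that $(X,\tilde{d})$ is complete and geodesic: the local bicombing $\sigma$ provides short geodesics, hence an ample supply of rectifiable paths, and a Cauchy sequence for $\tilde{d}$ is Cauchy for $d$. For every $p\in X$ fix a neighborhood $U_p$ on which $\sigma$ is defined and convex. Given a rectifiable path $\gamma\colon[0,1]\to X$ from $x$ to $y$, choose a partition $0=t_0<\cdots<t_n=1$ fine enough that consecutive points $\gamma(t_{i-1}),\gamma(t_i)$ lie in a common $U_p$, and replace $\gamma$ by the concatenation $P(\gamma)$ of the $\sigma$-geodesics $\sigma_{\gamma(t_{i-1})\gamma(t_i)}$, reparametrized proportionally to length. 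Iterating (for instance by bisecting subintervals and reapplying $P$) yields a sequence of piecewise $\sigma$-geodesic paths; convexity of $\sigma$ in each $U_p$ forces consecutive iterates to be sup-close at a geometric rate, so the sequence converges uniformly to a Lipschitz path $\tilde{\sigma}_{xy}^{[\gamma]}$ of length $\tilde{d}(x,y)$.

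Next, I would show that the limit depends only on the homotopy class of $\gamma$ rel endpoints. Two paths that are sup-close yield sup-close limits by local convexity. For an arbitrary homotopy $H\colon[0,1]^2\to X$ between $\gamma_0$ and $\gamma_1$, cover $[0,1]^2$ by a grid fine enough that each closed cell maps into some $U_p$ and chain the comparison across adjacent cells. Since $X$ is simply-connected, every pair of paths from $x$ to $y$ is homotopic rel endpoints, so $\tilde{\sigma}_{xy}:=\tilde{\sigma}_{xy}^{[\gamma]}$ is well defined.

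Finally, the axioms of a convex geodesic bicombing pass to the uniform limit: convexity of $t\mapsto \tilde{d}(\tilde{\sigma}_{xy}(t),\tilde{\sigma}_{\bar{x}\bar{y}}(t))$ follows from the same statement for each approximant, and consistency is inherited because straightening a subinterval yields the corresponding subinterval of the straightening. Uniqueness follows from the observation that any convex bicombing $\tau$ consistent with $\sigma$ agrees with a sufficiently fine piecewise $\sigma$-geodesic on each segment and then, by convexity, with its limit. Reversibility is preserved throughout the iteration, hence in the limit, and contractibility is immediate from $(x,t)\mapsto\tilde{\sigma}_{xx_0}(t)$. The main obstacle will be ensuring uniform control of the local bicombing radii along a given path and along a homotopy, together with the guarantee that successive straightening iterates do not leave admissible domains, so that the comparison can be chained; this likely demands a compactness argument and a quantitative choice of partition.
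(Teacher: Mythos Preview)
Your strategy---Birkhoff-type straightening of an arbitrary rectifiable path, followed by homotopy invariance---is genuinely different from the paper's. The paper instead isolates a single perturbation lemma (Lemma~\ref{lem:near geodesics}): given a consistent local geodesic $c$, nearby endpoints determine a unique nearby consistent local geodesic, with convexity and length control. From this it builds the space $\tilde X_{x_0}$ of consistent local geodesics issuing from $x_0$, shows that the endpoint map $\exp$ is a local isometry and hence a covering map, and uses simple-connectedness to conclude that $\exp$ is a homeomorphism. Existence and uniqueness of consistent local geodesics between arbitrary points are then automatic; that they minimize length is a separate short open--closed argument, and global convexity is obtained by telescoping the local convexity along the geodesics $\tilde\sigma_{x\bar x}$ and $\tilde\sigma_{y\bar y}$.

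Your plan has a genuine gap at its core step: the claim that the straightening iterates are Cauchy at a geometric rate. Applying $P$ once produces a piecewise $\sigma$-geodesic, but reapplying $P$ with the same partition returns the same path; ``bisecting subintervals and reapplying $P$'' does not, by itself, yield a contraction, and nothing prevents the limit from retaining corners at the partition points. What is actually needed is an overlapping-midpoint argument together with a bootstrap in scale---precisely the content of Lemma~\ref{lem:near geodesics}, proved via an induction $\mathsf{P}(A)\Rightarrow\mathsf{P}(\tfrac{3}{2}A)$ in which an alternating midpoint iteration converges at rate $2^{-n}$. Without that lemma you also cannot justify ``two sup-close paths yield sup-close limits'', which your homotopy-chaining relies on. A second gap: the piecewise $\sigma$-geodesic approximants do \emph{not} satisfy global convexity of $t\mapsto d(\gamma(t),\bar\gamma(t))$---only piecewise convexity---so convexity cannot simply be passed to the limit; the paper circumvents this by telescoping along $\tilde\sigma_{x\bar x}$ and $\tilde\sigma_{y\bar y}$ once the bicombing is in hand. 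The obstacle you single out (uniform control of radii along a path or homotopy) is real but secondary and is handled by compactness of the image; the missing ingredient is the contraction estimate itself.
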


As we show in a subsequent paper joined with G. Basso \cite{BasM}, this leads to a uniqueness result for convex geodesic bicombings on convex subsets of certain Banach spaces.

Important examples of spaces with convex geodesic bicombings are given by injective metric spaces. A metric space $X$ is \emph{injective} if for all metric spaces $A,B$ with $A \subset B$ and every 1-Lipschitz map $f \colon A \to X$, there is a 1-Lipschitz extension $\bar{f}\colon B \to X$, i.e. $\bar{f}|_A=f$.
In fact, D. Descombes and U. Lang show in their work that every proper, injective metric space of finite combinatorial dimension admits a (unique) convex geodesic bicombing \cite[Theorem 1.2]{Des}. Such spaces occur, for instance, as injective hulls of hyperbolic groups \cite[Theorem 1.4]{Lan} and therefore, every hyperbolic group acts properly and cocompactly by isometries on a space with a convex geodesic bicombing \cite[Theorem 1.3]{Des}.

Recall that injective metric spaces are complete, geodesic and contractible. Now, knowing that under the above conditions injective metric spaces possess a convex geodesic bicombing, we deduce the following local-to-global theorem for injective metric spaces.

\begin{Thm}\label{thm:local injective metric spaces}
	Let $X$ be a complete, locally compact, simply-connected, locally injective length space with locally finite combinatorial dimension. Then $X$ is an injective metric space.
\end{Thm}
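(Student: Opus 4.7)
The strategy is to reduce to Theorem~\ref{thm:convex bicombing} and then deduce injectivity from the resulting global convex bicombing together with local injectivity. I proceed in two main steps.

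First, I would equip $X$ with a convex local geodesic bicombing. Given $p \in X$, using local compactness and completeness to ensure properness of small closed balls, together with local injectivity and locally finite combinatorial dimension, one finds a neighborhood $U_p$ of $p$ whose closure is a proper injective metric space of finite combinatorial dimension. Theorem~1.2 of~\cite{Des} then endows $\overline{U_p}$ with its unique convex geodesic bicombing $\sigma^p$. The uniqueness clause forces the $\sigma^p$ to be mutually compatible on overlaps, so they assemble into a convex local geodesic bicombing $\sigma$ on $X$. Since $X$ is already a length space, the induced length metric coincides with the original metric, and Theorem~\ref{thm:convex bicombing} yields a global convex geodesic bicombing $\tilde\sigma$ on $X$ consistent with $\sigma$; in particular $X$ is contractible.

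Second, I would deduce injectivity. By the Aronszajn--Panitchpakdi characterization, it suffices to verify that $X$ is hyperconvex: for every finite family of closed balls $\overline{B}(x_i, r_i)$ with $d(x_i, x_j) \leq r_i + r_j$, the intersection $\bigcap_i \overline{B}(x_i, r_i)$ is nonempty. I would consider the $\tilde\sigma$-convex, $1$-Lipschitz function $\varphi(x) := \max_i (d(x, x_i) - r_i)$ and aim to show that $\inf \varphi = 0$ and is attained. Local injectivity already provides the analogous Helly property for sufficiently localized configurations, so the plan is to use the global bicombing $\tilde\sigma$ to transport and recombine such local intersection points into a global one.

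The main obstacle will be precisely this bridging step from local to global hyperconvexity. One expects to contract a given ball configuration along $\tilde\sigma$ toward a minimizing configuration, use the locally finite combinatorial dimension to bound the complexity of the intermediate configurations, and invoke the contractibility of $X$ furnished by Theorem~\ref{thm:convex bicombing} to rule out topological obstructions when gluing local intersection data. Carrying this out --- in particular showing that $\inf\varphi$ equals $0$ and is attained --- is likely the most delicate part of the argument; an alternative route that might be simpler is to construct, from the local $1$-Lipschitz retractions onto injective neighborhoods, a global $1$-Lipschitz retraction of the injective hull $E(X)$ onto $X$, exhibiting $X$ as a retract of an injective space.
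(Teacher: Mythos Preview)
Your first step is essentially the paper's Corollary~\ref{cor:convex local bicombing} followed by Theorem~\ref{thm:convex bicombing}, so the overall architecture matches. Two points are worth tightening. First, ``uniqueness forces the $\sigma^p$ to be mutually compatible on overlaps'' is not immediate: uniqueness tells you the bicombing on $\overline{U_p}$ is unique, not that its restriction to $\overline{U_p}\cap\overline{U_q}$ agrees with the one coming from $\overline{U_q}$. The paper fixes this by taking $B(x,3r_x)$ large enough to contain both $B(x,r_x)$ and $B(y,r_y)$, so that both local bicombings are restrictions of a single one on the larger ball. Second, you never track reversibility: the bicombing from \cite[Theorem~1.2]{Des} is reversible, and Theorem~\ref{thm:convex bicombing} then makes the global $\tilde\sigma$ reversible; this matters below.

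The genuine gap is your second step. You correctly identify it as the main obstacle and leave it unresolved, but the paper has a short, direct argument you are missing. Rather than minimizing $\varphi$ or building a retraction of $E(X)$, the paper proves (Proposition~\ref{prop:uniformly locally injective with bicombing}) a doubling statement $\mathsf P(R)\Rightarrow\mathsf P(2R)$: given balls $B(x_i,r_i)$ with $r_i\le 2R$, form the midpoints $y_{ij}=\tilde\sigma_{x_ix_j}(\tfrac12)$; convexity gives $d(y_{ij},y_{ik})\le \tfrac{r_j}{2}+\tfrac{r_k}{2}$, so by $\mathsf P(R)$ there exist $z_i\in\bigcap_j B(y_{ij},\tfrac{r_j}{2})$, and then $d(z_i,z_j)\le\tfrac{r_i}{2}+\tfrac{r_j}{2}$ (this uses $y_{ij}=y_{ji}$, i.e.\ reversibility), so a second application of $\mathsf P(R)$ produces a point in $\bigcap_i B(x_i,r_i)$. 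Uniform local injectivity seeds the induction; properness (via Hopf--Rinow from complete, locally compact, length) then handles arbitrary families and the non-uniform case. Your $\varphi$-minimization would establish attainment by properness, but gives no mechanism for $\inf\varphi\le 0$; the midpoint doubling is exactly that mechanism.
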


It is well known that injective metric spaces are the same as absolute $1$-Lipschitz retracts. For Lipschitz retracts, the weaker notion of absolute Lipschitz uniform neighborhood retracts is common; e.g. see \cite{Haj}. Absolute $1$-Lipschitz uniform neighborhood retracts are locally injective but the converse is not true as we will see in Example~\ref{expl:sphere}. In fact, it turns out that the following holds.

\begin{Thm}\label{thm:absolute 1-lip nbhd retracts}
	Let $X$ be a locally compact absolute $1$-Lipschitz uniform neighborhood retract with locally finite combinatorial dimension. Then $X$ is an absolute $1$-Lipschitz retract.
\end{Thm}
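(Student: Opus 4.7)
The plan is to apply Theorem~\ref{thm:local injective metric spaces} to $X$, so I need to verify that $X$ is complete, a length space, simply-connected, and locally injective, since local compactness and locally finite combinatorial dimension are given. Throughout, I would embed $X$ isometrically into its injective hull $E(X)$ and use the AULNR property to fix some $\varepsilon>0$ together with a $1$-Lipschitz retraction $\rho\colon N_\varepsilon(X)\to X$ from the $\varepsilon$-neighborhood of $X$ in $E(X)$. Once the hypotheses are verified and $X$ is shown injective, the classical identification of injective spaces with absolute $1$-Lipschitz retracts completes the proof.

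The easy properties come almost directly from $\rho$. Local injectivity is already recorded in the paper; alternatively, for small $r$ the closed ball $\ol{B}_{E(X)}(x,r)$ is injective and sits inside $N_\varepsilon(X)$, and $\rho$ restricts to a $1$-Lipschitz retraction onto $\ol{B}_X(x,r)$, exhibiting the latter as a $1$-Lipschitz retract of an injective space. Completeness follows by embedding the metric completion $\b X$ into $E(X)$: every point of $\b X\sm X$ lies at zero distance from $X$, hence inside $N_\varepsilon(X)$, so $\rho$ extends the identity of $X$ to a $1$-Lipschitz map $\b X\to X$; any Cauchy sequence in $X$ therefore converges in $X$. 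For the length space property, $X$ is path-connected because it retracts from the connected open set $N_\varepsilon(X)\sub E(X)$, and for $x,y\in X$ with $d(x,y)<2\varepsilon$ the geodesic $[x,y]$ in $E(X)$ stays in $N_\varepsilon(X)$, so $\rho\circ[x,y]$ is a path in $X$ of length at most $d(x,y)$; piecing together such local geodesics along any rectifiable path shows that the intrinsic length metric on $X$ agrees with $d$.

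The main obstacle is simply-connectedness. My approach is to form the universal cover $\pi\colon\tilde X\to X$, which exists because local injectivity makes $X$ locally contractible, and to equip $\tilde X$ with the pullback length metric so that $\pi$ is a local isometry. Then $\tilde X$ inherits completeness, local compactness, local injectivity, the length space property, and locally finite combinatorial dimension from $X$, and is simply-connected by construction; hence Theorem~\ref{thm:local injective metric spaces} applies to $\tilde X$ and yields that $\tilde X$ is injective, in particular contractible. To push this back to $X$, I would lift $\rho$ to a $1$-Lipschitz retraction $\tilde\rho\colon\tilde N\to\tilde X$, where $\tilde N$ is the connected component of the pullback covering of $N_\varepsilon(X)$ that contains $\tilde X$, and then argue that any nontrivial deck transformation $g$ of $\pi$ would force a loop in $X$ that cannot be contracted via $\rho$ and a null-homotopy in the contractible space $E(X)$, yielding a contradiction. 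This last step---controlling how a null-homotopy in $E(X)$ interacts with $N_\varepsilon(X)$ well enough to rule out nontrivial deck transformations---is the technical heart of the argument.

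Once simply-connectedness is in place, all hypotheses of Theorem~\ref{thm:local injective metric spaces} are satisfied, so $X$ is injective and therefore an absolute $1$-Lipschitz retract.
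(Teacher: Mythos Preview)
Your overall plan---reduce to Theorem~\ref{thm:local injective metric spaces} by checking completeness, the length-space property, local injectivity, and simple connectedness---is exactly the paper's route, and your arguments for the first three properties are essentially the paper's (Lemma~\ref{lem:locally injective} and the first two parts of Lemma~\ref{lem:geodesic}, with $E(X)$ in place of $l_\infty(X)$, which makes no difference).

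The genuine gap is simple connectedness. Your universal-cover detour does not close: you reach the point where $\tilde X$ is injective, but then the step ``rule out nontrivial deck transformations using $\rho$ and a null-homotopy in $E(X)$'' is precisely the original problem in disguise. A null-homotopy of a loop $\gamma\subset X$ certainly exists in the contractible space $E(X)$, but you have no control forcing it to stay inside $N_\varepsilon(X)$, so $\rho$ cannot be applied to it; and the existence of the lifted retraction $\tilde\rho\colon\tilde N\to\tilde X$ already presupposes that $\rho_*\colon\pi_1(N_\varepsilon(X))\to\pi_1(X)$ kills the relevant classes, which is again what you are trying to prove. In short, the detour through $\tilde X$ buys nothing: the ``technical heart'' you flag is the whole difficulty, and you have not supplied an argument for it.

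The paper handles simple connectedness directly and avoids the universal cover entirely. Given a rectifiable loop $\gamma$ in $X$ of length $2\pi R$ with $R>r:=\varepsilon/2$, parametrize it isometrically by the outer boundary of the planar annulus $A=\{\,R-r\le\|x\|\le R\,\}$, extend this map $1$-Lipschitzly to $\bar f\colon A\to l_\infty(X)$ by injectivity of the ambient space, and observe that $\bar f(A)\subset U(X,\varepsilon)$ since every point of $A$ is within $r$ of the outer circle. Then $\rho\circ\bar f$ is a homotopy in $X$ from $\gamma$ to a loop of length at most $2\pi(R-r)$. Iterating shortens any loop below $2\pi r$, at which point the same trick with a disk in place of an annulus contracts it. This annulus-shortening argument is the missing idea in your proposal.
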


This paper is organized as follows. We start Section~\ref{sec:local bicombings} by studying spaces with local geodesic bicombings, establish an appropriate exponential map and finally prove Theorem~\ref{thm:convex bicombing}.
In Section~\ref{sec:locally injective}, we first show that every uniformly locally injective metric space with a reversible, convex geodesic bicombing is injective. Afterwards, we describe how to construct a reversible, convex local geodesic bicombing on locally injective metric spaces, which extends to a convex geodesic bicombing by Theorem~\ref{thm:convex bicombing}. Thereby we establish Theorem~\ref{thm:local injective metric spaces}.
In the final Section~\ref{sec:absolute 1-lipschitz neighborhood retracts}, we then investigate absolute $1$-Lipschitz neighborhood retracts and prove Theorem~\ref{thm:absolute 1-lip nbhd retracts}.


\section{Local Geodesic Bicombings} \label{sec:local bicombings}

	
	Let us first fix some notation. In a metric space $X$, we denote by $$U(x,r) := \{ y \in X : d(x,y) < r \}$$ the open ball of radius $r$ around $x \in X$ and by $$B(x,r) := \{ y \in X : d(x,y) \leq r \}$$ the closed one.

	Let $X$ be a metric space and $\gamma \colon [0,1] \to X$ a continuous curve. The \emph{length} of $\gamma$ is given by
$$L(\gamma) := \sup \left\{ \sum_{k=1}^n d(\gamma(t_{k-1}),\gamma(t_k)) : 0 = t_0 < \ldots < t_n=1 \right\}.$$
Then
$$\bar{d}(x,y) := \inf \left\{ L(\gamma) : \gamma \colon [0,1] \to X, \gamma(0)=x, \gamma(1) = y \right\}$$ defines a metric on $X$, called the \emph{induced length metric}. If we have $d = \bar{d}$, we say that $(X,d)$ is a \emph{length space}.

	For a metric space $X$, let $\mathcal{G}(X) := \{ c \colon [0,1] \to X \}$ be the set of all geodesics in $X$, i.e. continuous maps $c \colon [0,1] \to X$ with $d(c(s),c(t)) = |s-t| \cdot d(c(0),c(1))$ for all $s,t \in [0,1]$. We equip $\mathcal{G}(X)$ with the metric $$D(c,c') := \sup_{t \in [0,1]} d(c(t),c'(t)).$$
	Let $c \in \mathcal{G}(X)$ and $0 \leq a \leq b \leq 1$, then the restriction $c|_{[a,b]}$ denotes the reparametrized geodesic given by $c|_{[a,b]} \colon [0,1] \to X$ with $c|_{[a,b]}(t) = c((1-t)a + tb)$.

\begin{Def}\label{def:local bicombing}
	 A \emph{local geodesic bicombing} on a metric space $X$ is a local selection of geodesics $\sigma \colon U \subset X \times X \to \mathcal{G}(X), (x,y) \mapsto \sigma_{xy}$ with the following properties:
	\begin{enumerate}[(i)]
		\item For all $x \in X$, there is some $r_x > 0$ such that, for all $y,z \in U(x,r_x)$, there is a geodesic $\sigma_{yz} \colon [0,1] \to U(x,r_x)$ from $y$ to $z$, and $$U = \{ (y,z) \in X \times X : y,z \in U(x,r_x) \text{ for some } x \}.$$
		\item The selection is consistent with taking subsegments of geodesics, i.e.
		$$\sigma_{\sigma_{xy}(a)\sigma_{xy}(b)}(t) = \sigma_{xy}((1-t)a+tb
)$$
	for $(x,y) \in U$, $0 \leq a \leq b \leq 1$ and $t \in [0,1]$.
	\end{enumerate}
\end{Def}

	We call a local geodesic bicombing $\sigma$ \emph{convex} if it is locally convex, i.e. for $y,z,y',z' \in U(x,r_x)$, it holds that
	$$t \mapsto d(\sigma_{yz}(t),\sigma_{y'z'}(t))$$
	is a convex function. Furthermore, $\sigma$ is \emph{reversible} if
	$$\sigma_{zy}(t) = \sigma_{yz}(1-t)$$
	for all $(y,z) \in U$ and $t \in [0,1]$.

\begin{Rem}
	Observe that, by consistency, a (local) geodesic bicombing is convex if and only if $$d(\sigma_{yz}(t),\sigma_{y'z'}(t)) \leq (1-t)d(y,y')+td(z,z')$$
	for all $(y,z),(y',z') \in U(x,r_x)$ and $t \in [0,1]$.
\end{Rem}

	A (local) geodesic $c \colon [0,1] \to X$ is \emph{consistent} with the local geodesic bicombing $\sigma$ if $$c ((1-t)a+tb) = \sigma_{c(a)c(b)}(t)$$ for all $0 \leq a \leq b \leq 1$ with $(c(a),c(b)) \in U$.

To prove Theorem~\ref{thm:convex bicombing}, we roughly follow the structure of Chapter~II.4 in \cite{Bri}. Adapting the methods of S. Alexander and R. Bishop \cite{Ale}, we can prove the following key lemma.

\begin{Lem}\label{lem:near geodesics}
	Let $X$ be a complete metric space with a convex local geodesic bicombing $\sigma$ and let $c$ be a local geodesic from $x$ to $y$ which is consistent with $\sigma$. Then, there is some $\epsilon > 0$ such that, for all $\bar{x},\bar{y} \in X$ with $d(x,\bar{x}),d(y,\bar{y}) < \epsilon$, there is a unique local geodesic $\bar{c}$ from $\bar{x}$ to $\bar{y}$ with $D(c,\bar{c}) < \epsilon$ which is consistent with $\sigma$. Moreover, we have $$L(\bar{c}) \leq L(c) + d(x,\bar{x}) + d(y,\bar{y})$$ and if $\tilde{c}$ is another consistent geodesic from $\tilde{x}$ to $\tilde{y}$ with $D(c,\tilde{c}) < \epsilon$, then $$t \mapsto d(\tilde{c}(t),\bar{c}(t))$$ is convex.
\end{Lem}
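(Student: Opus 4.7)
My plan is to follow the classical broken-geodesic / fixed-point iteration (in the spirit of Bridson--Haefliger II.4 and Alexander--Bishop). Using compactness of $c([0,1])$, I first choose a subdivision $0 = t_0 < t_1 < \cdots < t_N = 1$ and centers $p_i$ with radii $r_i$ from the local bicombing such that each triple $c([t_{i-1},t_{i+1}])$, together with a uniform neighborhood, lies inside a convex bicombing ball $U(p_i, r_i)$. For $\bar x, \bar y$ with $d(x,\bar x), d(y,\bar y) < \epsilon$ and any interior tuple $(\bar q_1, \ldots, \bar q_{N-1})$ with $\bar q_i \in B(c(t_i), \epsilon)$, I build a candidate $\bar c$ by concatenating the $\sigma$-geodesics $\sigma_{\bar q_{i-1}\bar q_i}$ (with $\bar q_0 := \bar x$, $\bar q_N := \bar y$), each parametrised on $[t_{i-1}, t_i]$. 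By axiom (ii) of Definition~\ref{def:local bicombing}, $\bar c$ is $\sigma$-consistent precisely when
\[
\bar q_i = \sigma_{\bar q_{i-1}\bar q_{i+1}}(\lambda_i), \qquad \lambda_i := \frac{t_i - t_{i-1}}{t_{i+1} - t_{i-1}},
\]
which is the fixed-point equation for the map $\Phi$ whose $i$th coordinate is the right-hand side. Convexity of $\sigma$ gives $d(\Phi(\bar q)_i, \Phi(\bar q')_i) \leq (1-\lambda_i) d(\bar q_{i-1},\bar q'_{i-1}) + \lambda_i d(\bar q_{i+1}, \bar q'_{i+1})$; for two iterates with identical boundary data the error is governed by a sub-stochastic tridiagonal matrix with absorbing boundary rows, whose spectral radius is strictly less than $1$. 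Hence $\Phi$ is eventually a strict contraction on the complete product of closed $\epsilon$-balls and has a unique fixed point, producing $\bar c$.

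For the convexity assertion I exploit the overlapping design of the partition: for any $\sigma$-consistent local geodesic $\bar c$ (or $\tilde c$) staying within $\epsilon$ of $c$, the restriction to each double subinterval $[t_{i-1}, t_{i+1}]$ again lies inside $U(p_i, r_i)$ and hence, by consistency, equals a single $\sigma$-geodesic from $\bar c(t_{i-1})$ to $\bar c(t_{i+1})$. Convexity of $\sigma$ then makes $t \mapsto d(\tilde c(t), \bar c(t))$ convex on every $[t_{i-1}, t_{i+1}]$; since these intervals overlap and cover $[0,1]$ and the function is continuous, it is convex on $[0,1]$. Uniqueness of $\bar c$ for prescribed endpoints is then immediate, as the resulting convex function would vanish at $0$ and $1$.

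The main obstacle I anticipate is the sharp length bound $L(\bar c) \leq L(c) + d(x,\bar x) + d(y,\bar y)$, since a naive summation of triangle inequalities picks up an error proportional to $\sum d(\bar q_i, q_i)$ that scales with the number of pieces $N$. I would instead exploit Lipschitz dependence of the fixed point on the boundary data. A discrete maximum principle, comparing the error sequence $a_i := d(\bar q_i^{[1]}, \bar q_i^{[2]})$ with the harmonic function $(1 - t_i) D$ solving $a_i = (1-\lambda_i) a_{i-1} + \lambda_i a_{i+1}$ with $a_0 = D$, $a_N = 0$, shows that varying $\bar x$ by $D$ while keeping $\bar y$ fixed moves $\bar q_{N-1}$ by at most $(1 - t_{N-1}) D$. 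A $\sigma$-consistent local geodesic has constant speed $v$ (since the speeds on overlapping double-intervals must agree), with $v(1 - t_{N-1}) = d(\bar q_{N-1}, \bar y)$, so $|v^{[1]} - v^{[2]}| \leq D$. Hence $L(\bar c) = v$ is $1$-Lipschitz in $\bar x$, and by symmetry in $\bar y$; comparing with the base case $(\bar x, \bar y) = (x, y)$ yields the desired bound.
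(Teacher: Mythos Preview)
Your argument is correct, but it follows a different route from the paper's. The paper does not fix a subdivision and run a global contraction; instead it argues inductively in the spirit of Alexander--Bishop: one formulates a statement $\mathsf{P}(A)$ asserting existence, uniqueness, and convexity for subintervals of $c$ of parameter-length at most $A$, verifies $\mathsf{P}(\epsilon/L(c))$ from the local bicombing, and proves $\mathsf{P}(A)\Rightarrow\mathsf{P}(\tfrac{3A}{2})$ by splitting the interval into two overlapping pieces of length $A$, alternately replacing midpoints, and showing the resulting sequence is Cauchy. Convexity on the longer interval is then obtained by a short four-variable estimate at the one-third points, and uniqueness follows from convexity. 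Your approach instead chooses the subdivision once and for all and encodes the consistency equations as a fixed point of a map $\Phi$ whose linearised error is governed by a sub-stochastic tridiagonal matrix; this is the Bridson--Haefliger/Ballmann scheme. Both yield the same conclusion; the paper's inductive halving avoids the spectral-radius/eventually-contracting step, while your version makes the dependence on the breakpoints explicit and gives a clean separation between existence and the later convexity/length statements.

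For the length bound the paper is considerably more direct. Having already proved that $t\mapsto d(c(t),\tilde c(t))$ is convex for the intermediate geodesic $\tilde c$ from $x$ to $\bar y$, it simply reads off, for small $t$,
\[
tL(\tilde c)=d(c(0),\tilde c(t))\le d(c(0),c(t))+d(c(t),\tilde c(t))\le tL(c)+t\,d(y,\bar y),
\]
and repeats once more between $\tilde c$ and $\bar c$. Your discrete maximum-principle computation is correct, but note that it is essentially a second proof of the convexity inequality $d(\bar c^{[1]}(t),\bar c^{[2]}(t))\le (1-t)\,d(\bar x^{[1]},\bar x^{[2]})$ at the breakpoints~$t_i$; since you have already established convexity via the overlapping double intervals, you could shortcut directly to the paper's two-line argument and dispense with the maximum principle altogether.
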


\begin{proof}
	Let $\epsilon > 0$ be such that $\sigma \big|_{ U(c(t), 2 \epsilon) \times U(c(t), 2 \epsilon) \times [0,1]}$ is a convex geodesic bicombing for all $t \in [0,1]$. Now, let $\mathsf{P}(A)$ be the following statement:
\begin{tabbing}
	$\mathsf{P}(A)$: \= For all $a,b \in [0,1]$ with $0 \leq b-a \leq A$ and for all $p,q \in X$ with \\
	\> $d(c(a),p), d(c(b),q) < \epsilon,$ there is a unique local geodesic $\bar{c}_{pq} \colon [0,1] \to X$ \\
	\> from $p$ to $q$ with $D(c|_{[a,b]},\bar{c}_{pq}) < \epsilon$ which is consistent with $\sigma$. Moreover, \\
	\> for all such local geodesics the map $t \mapsto d(\bar{c}_{pq}(t),\bar{c}_{p'q'}(t))$ is convex.
\end{tabbing}

	By our assumption, $\mathsf{P}(\frac{\epsilon}{l(c)})$ holds. Therefore, let us show $\mathsf{P}(A) \Rightarrow \mathsf{P}(\frac{3A}{2})$.
	
	Given $a,b \in [0,1]$ with $0 \leq b-a \leq \frac{3A}{2}$, define $p_0:=c(\frac{2}{3}a +\frac{1}{3}b)$ and $q_0:=c(\frac{1}{3}a +\frac{2}{3}b)$. Then, by $\mathsf{P}(A)$, there are consistent local geodesics $c_1$ from $p$ to $q_0$ and $c_1'$ from $p_0$ to $q$. Inductively, we set $p_n:=c_n(\frac{1}{2})$ and $q_n:=c_n'(\frac{1}{2})$, where $c_n$ is a consistent local geodesic from $p$ to $q_{n-1}$ and $c_n'$ from $p_{n-1}$ to $q$. Observe that, by convexity of the $c_n,c_n'$, we have $d(p_{n-1},p_n),d(q_{n-1},q_n) < \frac{\epsilon}{2^n}$ and hence the sequences $(p_n)_n$ and $(q_n)_n$ converge to some $p_\infty$ and $q_\infty$, respectively, and we have $d(p_\infty,p_0),d(q_\infty,q_0) < \epsilon$. Furthermore, by convexity, the $c_n,c_n'$ converge to the consistent local geodesics $c_\infty$ from $p$ to $q_\infty$ and $c_\infty'$ from $p_\infty$ to $q$, which coincide between $p_\infty=c_\infty(\frac{1}{2})$ and $q_\infty=c_\infty'(\frac{1}{2})$. Hence, they define a new local geodesic $c_{pq}$ from $p$ to $q$ which is consistent with $\sigma$ and $p_\infty=c_{pq}(\frac{1}{3})$, $q_\infty=c_{pq}(\frac{2}{3})$.
	
	Now, given two local geodesics $c_{pq}$ and $c_{p'q'}$ with $D(c|_{[a,b]},c_{pq}) < \epsilon$ and $D(c|_{[a,b]},c_{p'q'}) < \epsilon$,  set $s:=d(p,p')$, $t:=d(q,q')$, $s':=d(c_{pq}(\frac{1}{3}),c_{p'q'}(\frac{1}{3}))$ and $t':=d(c_{pq}(\frac{2}{3}),c_{p'q'}(\frac{2}{3}))$. Then we have $s' \leq \frac{s+t'}{2}$, $t' \leq \frac{s'+t}{2}$ and therefore $s' \leq \frac{s}{2}+ \frac{s'}{4} + \frac{t}{4}$, i.e. $s' \leq \frac{2s+t}{3}$ and similarly  $t' \leq \frac{s+2t}{3}$ follows. Hence, we get convexity of $t \mapsto d(c_{pq}(t),c_{p'q'}(t))$ and therefore also uniqueness follows.
	
	It remains to prove that $L(\bar{c}) \leq L(c) + d(x,\bar{x}) + d(y,\bar{y})$. Let $\tilde{c}$ be the unique consistent local geodesic from $x$ to $\bar{y}$ with $D(c,\tilde{c}) < \epsilon$. For $t$ small enough we have
	\begin{align*}
		t L(\tilde{c}) &= d(\tilde{c}(0),\tilde{c}(t)) = d(c(0),\tilde{c}(t)) \\
		&\leq d(c(0),c(t)) + d(c(t),\tilde{c}(t)) \leq t L(c) + t d(c(1),\tilde{c}(1)),
	\end{align*}
	i.e. $L(\tilde{c}) \leq L(c) + d(y,\bar{y})$ and similarly $L(\bar{c}) \leq L(\tilde{c}) + d(x,\bar{x})$.
\end{proof}

\begin{Def}\label{def:exp}
	Let $X$ be a metric space with a local geodesic bicombing $\sigma$. For some fixed $x_0 \in X$, we define
	$$\tilde{X}_{x_0} := \{  c \colon [0,1] \to X \text{ local geodesic with } c(0)=x_0, \text{ consistent with } \sigma \}.$$
	We equip $\tilde{X}_{x_0}$ with the metric $D(c,c') = \sup_{t \in [0,1]} d(c(t),c'(t))$ and define the map $$\exp \colon \tilde{X}_{x_0} \to X, \ c \mapsto c(1).$$
\end{Def}

If $X$ is complete, this map has the following properties.

\begin{Lem}\label{lem:properties of exp}
	Let $X$ be a complete metric space with a convex local geodesic bicombing $\sigma$. Then the following holds:
	\begin{enumerate}[(i)]
	\item The map $\exp \colon \tilde{X}_{x_0} \to X$ is locally an isometry. Hence $\sigma$ naturally induces a convex local geodesic bicombing $\tilde{\sigma}$ on $\tilde{X}_{x_0}$.
	
	\item $\tilde{X}_{x_0}$ is contractible.	
	
	\item For each $\tilde{x} \in \tilde{X}_{x_0}$, there is a unique local geodesic from $\tilde{x}_0$ to $\tilde{x}$ which is consistent with $\tilde{\sigma}$, where $\tilde{x}_0$ is the constant path $\tilde{x}_0(t)=x_0$.

	\item $\tilde{X}$ is complete.
	\end{enumerate}
\end{Lem}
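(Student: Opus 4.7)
The plan is to prove (i)--(iv) in order, with Lemma~\ref{lem:near geodesics} and the resulting local isometry property of $\exp$ providing all the technical input.

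For (i), given $c \in \tilde{X}_{x_0}$ with endpoint $y = c(1)$, I let $\epsilon > 0$ be as in Lemma~\ref{lem:near geodesics} applied to $c$. For any $\bar{y} \in U(y, \epsilon)$, the lemma (taking $\bar{x} = x_0$) produces a unique consistent local geodesic $\bar{c}$ from $x_0$ to $\bar{y}$ with $D(c, \bar{c}) < \epsilon$, yielding a local inverse $\bar{y} \mapsto \bar{c}$ of $\exp$. Moreover, Lemma~\ref{lem:near geodesics} says $t \mapsto d(c(t), \bar{c}(t))$ is convex and vanishes at $t = 0$, hence is non-decreasing, so its supremum on $[0, 1]$ is attained at $t = 1$; this gives $D(c, \bar{c}) = d(y, \bar{y})$, so $\exp$ is a local isometry. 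The induced bicombing $\tilde{\sigma}$ is then obtained by pulling $\sigma$ back through these local inverses, and convexity of $\tilde{\sigma}$ is inherited directly from that of $\sigma$.

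Parts (ii) and (iv) are comparatively routine. For (ii), I set $H \colon \tilde{X}_{x_0} \times [0, 1] \to \tilde{X}_{x_0}$, $H(c, s)(t) := c(st)$; each $H(c, s)$ is a reparametrized initial subsegment of $c$ and hence $\sigma$-consistent by Definition~\ref{def:local bicombing}(ii), with $H(c, 0) = \tilde{x}_0$ and $H(c, 1) = c$, while joint continuity follows from the constant-speed property of consistent local geodesics. For (iv), a $D$-Cauchy sequence $(\tilde{c}_n)$ converges uniformly to some $\tilde{c} \colon [0, 1] \to X$ by completeness of $X$; convexity makes $\sigma$ continuous in its endpoints (via the inequality in the Remark after Definition~\ref{def:local bicombing}), so passing to the limit in the consistency identity $\tilde{c}_n((1 - t) a + t b) = \sigma_{\tilde{c}_n(a) \tilde{c}_n(b)}(t)$ (valid eventually for fixed $a \le b$ with $(\tilde{c}(a), \tilde{c}(b)) \in U$, by openness of $U$) shows that $\tilde{c}$ is itself a consistent local geodesic.

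Assertion (iii) is the step I expect to cost the most. For existence, $\gamma(s) := H(\tilde{x}, s)$ satisfies $\gamma(0) = \tilde{x}_0$, $\gamma(1) = \tilde{x}$, and $\exp \circ \gamma = \tilde{x}$; because $\exp$ locally intertwines $\tilde{\sigma}$ with $\sigma$ by (i) and $\tilde{x}$ is $\sigma$-consistent, $\gamma$ is $\tilde{\sigma}$-consistent. For uniqueness, let $\tilde{\eta}$ be any consistent local geodesic from $\tilde{x}_0$ to $\tilde{x}$ and set $\eta := \exp \circ \tilde{\eta}$; by the intertwining, $\eta$ is a consistent local geodesic in $X$ from $x_0$. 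The map $\tilde{\eta}'(s)(t) := \eta(st)$ is well defined in $\tilde{X}_{x_0}$ (as an initial subsegment of $\eta$, it is $\sigma$-consistent), continuous in $s$, satisfies $\exp \circ \tilde{\eta}' = \eta$, and starts at $\tilde{x}_0$. Since $\exp$ is a local homeomorphism by (i), the standard open-and-closed argument gives uniqueness of continuous lifts with fixed starting point, so $\tilde{\eta}' = \tilde{\eta}$. Evaluating at $s = 1$ then forces $\eta = \tilde{x}$, so $\tilde{\eta}(s)(t) = \tilde{x}(st) = \gamma(s)(t)$, and $\tilde{\eta} = \gamma$.
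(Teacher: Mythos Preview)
Your proof is correct and follows essentially the same route as the paper's: Lemma~\ref{lem:near geodesics} gives the local isometry in (i), the contraction $H(c,s)(t)=c(st)$ handles (ii) and supplies the candidate geodesic in (iii), uniqueness in (iii) comes from uniqueness of lifts through the local homeomorphism $\exp$, and (iv) passes consistency to the uniform limit via the continuity of $\sigma$ in its endpoints. The paper compresses (iii) into the single observation that $\tilde c$ is a $\tilde\sigma$-consistent local geodesic in $\tilde X_{x_0}$ iff $\exp\circ\tilde c$ is a $\sigma$-consistent local geodesic in $X$, leaving the lift-uniqueness step implicit; your version spells this out explicitly, which is a welcome clarification rather than a different argument.
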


\begin{proof}
	By Lemma~\ref{lem:near geodesics}, for every $c \in \tilde{X}_{x_0}$, there is some $\epsilon >0$ such that the map $\exp \big|_{U(c,\epsilon)} \colon U(c,\epsilon) \to U(c(1),\epsilon)$ is an isometry. Hence, $\sigma$ naturally induces a convex local geodesic bicombing $\tilde{\sigma}$ on $\tilde{X}_{x_0}$.

	Consider the map $r \colon \tilde{X}_{x_0} \times [0,1] \to \tilde{X}_{x_0}, (c,s) \mapsto (r_s(c) \colon t \mapsto c(st))$. This defines a retraction of $\tilde{X}_{x_0}$ to $\tilde{x}_0$.

	A continuous path $\tilde{c} \colon [0,1] \to \tilde{X}_{x_0}$ is a local geodesic in $\tilde{X}_{x_0}$ which is consistent with $\tilde{\sigma}$ if and only if $\exp \circ \tilde{c}$ is a local geodesic in $X$ which is consistent with $\sigma$. Therefore, for any $c \in \tilde{X}_{x_0}$, the map $s \mapsto r_s(c)$ is the unique local geodesic from $\tilde{x}_0$ to $c$.
	
	Finally, if $(c_n)_n$ is a Cauchy sequence in $\tilde{X}$, by completeness of $X$, for every $t \in [0,1]$, the sequences $(c_n(t))_n$ converge in $X$, to $c(t)$ say. Locally, i.e. inside $U(c(t),r_{c(t)})$, the subsegment $c|_{[t-\epsilon,t+\epsilon]}$ is the limit of the consistent geodesics $(c_n|_{[t-\epsilon,t+\epsilon]})_n$ and hence $c$ is consistent with $\sigma$ by the convexity of the local geodesic bicombing.
\end{proof}

The following criterion will ensure that $\exp$ is a covering map.

\begin{Lem}\label{lem:covering map}
	Let $p \colon \tilde{X} \to X$ be a map of length spaces such that
	\begin{enumerate}[(i)]
	\item \label{it:covering map i} $X$ is connected,
	
	\item \label{it:covering map ii} $p$ is a local homeomorphism,
	
	\item \label{it:covering map iii} for all rectifiable curves $\tilde{c}\colon [0,1] \to \tilde{X}$, we have $L(\tilde{c}) \leq L(p \circ \tilde{c})$,
	
	\item \label{it:covering map iv} $X$ has a convex local geodesic bicombing $\sigma$, and
	
	\item \label{it:covering map v} $\tilde{X}$ is complete.
	\end{enumerate}
	Then $p$ is a covering map.
\end{Lem}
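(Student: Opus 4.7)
The plan is to show that each $x \in X$ has an evenly covered neighborhood. Fix $x \in X$ and pick $r < r_x$ small enough that $U := U(x,r)$ sits inside the domain where $\sigma$ is a convex geodesic bicombing. For every $\tilde{x} \in p^{-1}(x)$ I will build a map $\phi_{\tilde{x}}\colon U \to \tilde{X}$ that is a continuous local inverse of $p$, and then show that its image $\tilde{U}_{\tilde{x}}$ is open, and that $p^{-1}(U)$ is the disjoint union of the $\tilde{U}_{\tilde{x}}$ as $\tilde{x}$ ranges over $p^{-1}(x)$.

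The core technical step is path lifting of the geodesic $\gamma := \sigma_{xy}$ for $y \in U$. Set $T := \sup\{ t \in [0,1] : \gamma|_{[0,t]} \text{ lifts continuously starting at } \tilde{x}\}$. Hypothesis (ii) gives $T>0$, and the partial lift $\tilde{\gamma}\colon [0,T) \to \tilde{X}$ is locally given by a homeomorphic inverse of $p$, so by (iii) and a finite cover argument, $L(\tilde{\gamma}|_{[0,s]}) \leq L(\gamma|_{[0,s]}) \leq s\,d(x,y)$ for each $s < T$. Since $\tilde{X}$ is a length space, $d_{\tilde{X}}(\tilde{\gamma}(s_1),\tilde{\gamma}(s_2)) \leq (s_2-s_1)\,d(x,y)$, so $\tilde{\gamma}(s_n)$ is Cauchy as $s_n \to T$. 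Completeness (v) produces a limit $\tilde{y}_T$ with $p(\tilde{y}_T)=\gamma(T)$, and if $T<1$, applying the local homeomorphism near $\tilde{y}_T$ would extend the lift, contradicting maximality. Hence $T=1$, and I define $\phi_{\tilde{x}}(y) := \tilde{\gamma}_{\tilde{x},y}(1)$, where $\tilde{\gamma}_{\tilde{x},y}$ is the full lift.

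For continuity of $\phi_{\tilde{x}}$, let $y_n \to y$ in $U$. Convexity of $\sigma$ on $U(x,r_x)$ gives $D(\sigma_{xy_n},\sigma_{xy}) \to 0$. Cover the compact set $\tilde{\gamma}_{\tilde{x},y}([0,1])$ by finitely many sheets on which $p$ is a homeomorphism; for $n$ large, the curves $\sigma_{xy_n}$ stay in the corresponding images, so we may lift them via the local inverses of $p$. Uniqueness of path-lifting (with common starting point $\tilde{x}$) identifies this with $\tilde{\gamma}_{\tilde{x},y_n}$ and yields $D(\tilde{\gamma}_{\tilde{x},y_n},\tilde{\gamma}_{\tilde{x},y}) \to 0$, hence $\phi_{\tilde{x}}(y_n) \to \phi_{\tilde{x}}(y)$. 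Now $p\circ\phi_{\tilde{x}} = \id_U$, so $\phi_{\tilde{x}}$ is a continuous injection onto $\tilde{U}_{\tilde{x}} := \phi_{\tilde{x}}(U)$, and $(p|_{\tilde{U}_{\tilde{x}}})^{-1} = \phi_{\tilde{x}}$. Openness of $\tilde{U}_{\tilde{x}}$ follows at each $\tilde{z}=\phi_{\tilde{x}}(z)$: pick a sheet $\tilde{V}\ni\tilde{z}$ on which $p$ is a homeomorphism, shrink to a neighborhood $V\subset U\cap p(\tilde{V})$ of $z$ with $\phi_{\tilde{x}}(V)\subset\tilde{V}$ (possible by continuity), and note $\phi_{\tilde{x}}(V) = (p|_{\tilde{V}})^{-1}(V)$ is open.

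Finally I verify $p^{-1}(U) = \bigsqcup_{\tilde{x}\in p^{-1}(x)} \tilde{U}_{\tilde{x}}$. Disjointness: if $\tilde{z}\in\tilde{U}_{\tilde{x}_1}\cap\tilde{U}_{\tilde{x}_2}$ then the two lifts $\tilde{\gamma}_{\tilde{x}_i,z}$ end at $\tilde{z}$, and their time-reversals are two continuous lifts of the reversed curve $t\mapsto\sigma_{xz}(1-t)$ both starting at $\tilde{z}$; uniqueness of path-lifting for the local homeomorphism $p$ forces them to agree, so $\tilde{x}_1=\tilde{x}_2$. Covering: given $\tilde{z}\in p^{-1}(U)$ with $z=p(\tilde{z})\in U$, run the path-lifting argument of the second paragraph on the reversed curve $t\mapsto\sigma_{xz}(1-t)$ starting at $\tilde{z}$ to obtain a lift ending at some $\tilde{x}\in p^{-1}(x)$; reversing time yields a lift of $\sigma_{xz}$ from $\tilde{x}$ to $\tilde{z}$, which by uniqueness is $\tilde{\gamma}_{\tilde{x},z}$, whence $\tilde{z}\in\tilde{U}_{\tilde{x}}$. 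The main obstacle I anticipate is the lift-existence argument: verifying carefully that the rectifiability bound from (iii) transfers to the partial lift (so the Cauchy estimate goes through) and handling the fact that the bicombing need not be reversible (which is what forces the use of reversed curves rather than reversed bicombing geodesics in the covering/disjointness step).
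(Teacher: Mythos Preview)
Your proof is correct and follows essentially the same approach as the paper, which simply cites Proposition~I.3.28 of Bridson--Haefliger and indicates that in the second step one takes $U=U(x,r_x)$ and defines the section $s_{\tilde{x}}(y)=\tilde{\sigma}_{xy}(1)$ via the lifted bicombing geodesics; you have spelled out precisely that argument (rectifiable-curve lifting via (iii) and (v), construction and continuity of the sections, and the even-covering verification). One cosmetic remark: the inequality $d_{\tilde{X}}(\tilde{\gamma}(s_1),\tilde{\gamma}(s_2)) \leq L(\tilde{\gamma}|_{[s_1,s_2]})$ holds in any metric space and does not require $\tilde{X}$ to be a length space, so that hypothesis is not actually used at that point.
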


\begin{proof}
	The proof of Proposition I.3.28 in \cite{Bri} also works in our setting. In the second step, take $U=U(x,r_x)$ and define the maps $s_{\tilde{x}}\colon U(x,r_x) \to \tilde{X}$ by $s_{\tilde{x}}(y) = \tilde{\sigma}_{xy}(1)$, where $\tilde{\sigma}_{xy}$ is the unique lift of $\sigma_{xy}$ with $\tilde{\sigma}_{xy}(0)=\tilde{x}$.
\end{proof}

\begin{Rem}
	For a local isometry $p$, conditions \eqref{it:covering map ii} and \eqref{it:covering map iii} are satisfied.
\end{Rem}

\begin{Cor}\label{cor:covering map}
	Let $(X,d)$ be a complete, connected metric space with a convex local geodesic bicombing $\sigma$. Then $\exp \colon \tilde{X}_{x_0} \to X$ is a universal covering map.
\end{Cor}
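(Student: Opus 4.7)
The plan is to apply Lemma~\ref{lem:covering map} to the map $p := \exp \colon \tilde{X}_{x_0} \to X$ and then invoke simple connectivity of $\tilde{X}_{x_0}$ to upgrade the covering to a universal one.

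Since Lemma~\ref{lem:covering map} is formulated for length spaces, I first replace $d$ on $X$ by the induced length metric $\bar d$. Because $\sigma$ provides a geodesic between any two points in each ball $U(x, r_x)$, the metrics $d$ and $\bar d$ agree on sufficiently small balls around every point, so $(X, \bar d)$ is a length space, $\sigma$ remains a convex local geodesic bicombing for the new metric, and the set $\tilde{X}_{x_0}$ together with the map $\exp$ is unaffected by this substitution. Since $\exp$ is a local isometry by Lemma~\ref{lem:properties of exp}(i), the same reasoning shows that $\tilde{X}_{x_0}$ is itself a length space.

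Next I verify the five hypotheses of Lemma~\ref{lem:covering map}. Condition (i) is the assumed connectedness of $X$; (ii) holds because $\exp$ is a local isometry and hence a local homeomorphism; (iii) follows by partitioning a rectifiable curve $\tilde c \colon [0,1] \to \tilde X_{x_0}$ into finitely many subintervals on each of which $\exp$ restricts to a bijective isometry, whence $L(\tilde c) = L(\exp \circ \tilde c)$; (iv) is part of the hypothesis; and (v) is Lemma~\ref{lem:properties of exp}(iv). Lemma~\ref{lem:covering map} then yields that $\exp$ is a covering map.

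Finally, $\tilde X_{x_0}$ is contractible by Lemma~\ref{lem:properties of exp}(ii), and in particular simply connected, so $\exp$ is a universal covering map. The only mildly delicate step is the initial length-metric reduction, which must be done so as to preserve $\sigma$, $\tilde X_{x_0}$ and $\exp$; after that the argument is a direct assembly of the preceding lemmas.
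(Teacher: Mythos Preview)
Your proposal is correct and follows essentially the same approach as the paper: pass to the induced length metrics on both $X$ and $\tilde X_{x_0}$, observe that these coincide locally with the original metrics so that $\exp$ remains a local isometry and $\sigma$ remains a convex local geodesic bicombing, then apply Lemma~\ref{lem:covering map} and use the contractibility of $\tilde X_{x_0}$ from Lemma~\ref{lem:properties of exp}(ii) for universality. The paper's proof is just a terser version of what you wrote; the only cosmetic point is that rather than claiming $(\tilde X_{x_0}, D)$ is already a length space, one should (as the paper does) explicitly equip $\tilde X_{x_0}$ with its induced length metric $\bar D$ and note that $D$ and $\bar D$ agree locally.
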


\begin{proof}
	Consider the induced length metrics $\bar{d}$ and $\bar{D}$ on $X$ and $\tilde{X}_{x_0}$. Since $(X,d)$ locally is a length space, the metrics $d$ and $D$ locally coincide with $\bar{d}$ and $\bar{D}$, respectively. Hence $p$ still is a local isometry with respect to the length metrics and $\sigma$ is a convex local geodesic bicombing. Thus Lemma~\ref{lem:covering map} applies.
\end{proof}

\begin{proof}[Proof of Theorem~\ref{thm:convex bicombing}.]
	First, we show that, for all $x,y \in X$, there is a unique consistent local geodesic from $x$ to $y$. Since $X$ is simply-connected, the covering map $\exp \colon \tilde{X}_{x} \to X$ is a homeomorphism which is a local isometry and by Lemma ~\ref{lem:properties of exp}, there is a unique consistent local geodesic $\tilde{\sigma}_{xy}$ from $x$ to $y$.

	Next, we prove that $\tilde{\sigma}_{xy}$ is a geodesic. To do so, it is enough to show that, for every curve $\gamma \colon [0,1] \to X$ and every $t \in [0,1]$, we have $L(\tilde{\sigma}_{\gamma(0)\gamma(t)}) \leq L(\gamma|_{[0,t]})$. Let
	$$A:= \Big\{ s \in [0,1] : \forall t \in [0,s] \text{ we have } L(\tilde{\sigma}_{\gamma(0)\gamma(t)}) \leq L(\gamma|_{[0,t]}) \Big\}.$$
	Clearly, $A$ is non-empty and closed. To prove that $A$ is open, consider $s \in A$. For $\delta>0$ small enough, by Lemma~\ref{lem:near geodesics}, we have
	\begin{align*}
		L(\tilde{\sigma}_{\gamma(0)\gamma(s+\delta)}) &\leq L(\tilde{\sigma}_{\gamma(0)\gamma(s)}) + d(\gamma(s),\gamma(s+\delta)) \\
		&\leq L(\gamma|_{[0,s]})+L(\gamma|_{[s,s+\delta]}) = L(\gamma|_{[0,s+\delta]}).
	\end{align*}
	Hence, $A=[0,1]$ as desired.
		
	Finally, we show that $t \mapsto d(\tilde{\sigma}_{xy}(t),\tilde{\sigma}_{\bar{x}\bar{y}}(t))$ is convex. By Lemma~\ref{lem:near geodesics}, there is a sequence $0=t_1 < \ldots < t_n=1$ and $\epsilon_k > 0$ such that 
	\begin{itemize}
	\item the balls $U(\tilde{\sigma}_{x \bar{x}}(t_1), \epsilon_1), \ldots, U(\tilde{\sigma}_{x \bar{x}}(t_n), \epsilon_n)$ cover $\tilde{\sigma}_{x\bar{x}}$,
	
	\item the balls $U(\tilde{\sigma}_{y\bar{y}}(t_1), \epsilon_1), \ldots, U(\tilde{\sigma}_{y\bar{y}}(t_n), \epsilon_n)$ cover $\tilde{\sigma}_{y\bar{y}}$, and
	
	\item for all $p,\bar{p} \in U(\tilde{\sigma}_{x\bar{x}}(t_k),\epsilon_k)$ and $q,\bar{q} \in U(\tilde{\sigma}_{y\bar{y}}(t_k),\epsilon_k)$, the map $t \mapsto d(\tilde{\sigma}_{pq}(t),\tilde{\sigma}_{\bar{p}\bar{q}}(t))$ is convex.
	\end{itemize}
	 Consider now a sequence $0 = s_0 < s_1 < \ldots < s_n = 1$ with
	 $$\tilde{\sigma}_{x\bar{x}}(s_k) \in U(\tilde{\sigma}_{x\bar{x}}(t_k), \epsilon_k)\cap U(\tilde{\sigma}_{x\bar{x}}(t_{k+1}), \epsilon_{k+1}),$$
	 $$\tilde{\sigma}_{y\bar{y}}(s_k) \in U(\tilde{\sigma}_{y\bar{y}}(t_k), \epsilon_k)\cap U(\tilde{\sigma}_{y\bar{y}}(t_{k+1}), \epsilon_{k+1}),$$
	 for $k=1, \ldots , n-1$. Then we get
	\begin{align*}
		d(\tilde{\sigma}_{xy}(t)&,\tilde{\sigma}_{\bar{x}\bar{y}}(t)) \\
		&\leq \sum_{k=1}^n d(\tilde{\sigma}_{\tilde{\sigma}_{x\bar{x}}(s_{k-1})\tilde{\sigma}_{y\bar{y}}(s_{k-1})}(t),\tilde{\sigma}_{\tilde{\sigma}_{x\bar{x}}(s_k)\tilde{\sigma}_{y\bar{y}}(s_k)}(t)) \\
		&\leq (1-t) \left(\sum_{k=1}^n d(\tilde{\sigma}_{x\bar{x}}(s_{k-1}),\tilde{\sigma}_{x\bar{x}}(s_k)) \right) + t \left(\sum_{k=1}^n d(\tilde{\sigma}_{y\bar{y}}(s_{k-1}),\tilde{\sigma}_{y\bar{y}}(s_k)) \right) \\
		&= (1-t) d(x,\bar{x}) + t d(y,\bar{y}).
	\end{align*}
	Hence, $\tilde{\sigma}$ is a convex geodesic bicombing on $X$.
	
	If $\sigma$ is reversible, then $\tilde{\sigma}^\ast_{xy}(t) := \tilde{\sigma}_{yx}(1-t)$ also defines a convex geodesic bicombing on $X$ which is consistent with $\sigma$. Therefore, by uniqueness, $\tilde{\sigma}^\ast$ and $\tilde{\sigma}$ coincide, i.e. $\tilde{\sigma}$ is reversible.
\end{proof}


\section{Locally Injective Metric Spaces} \label{sec:locally injective}


N. Aronszajn and P. Panitchpakdi \cite{Aro} proved that injective metric spaces are exactly the \emph{hyperconvex} metric spaces, namely metric spaces with the property that for every family of closed balls $\{ B(x_i,r_i) \}_{i \in I}$ with $d(x_i,x_j) \leq r_i + r_j$, for all $i,j \in I$, we have $\bigcap_{i \in I} B(x_i,r_i) \neq \emptyset$. Note that in hyperconvex metric spaces closed balls are hyperconvex.

\begin{Def}\label{def:locally injective}
	A metric space $X$ is \emph{locally injective} if, for every $x \in X$, there is some $r_x > 0$ such that $B(x,r_x)$ is injective.
	If we can take $r_x=r$ for all $x$ we call $X$ \emph{uniformly locally injective}.
\end{Def}

\begin{Lem}\label{lem:injective balls}
	Let $X$ be a metric space with the property that every closed ball $B(x,r)$ is injective, then $X$ is itself injective.
\end{Lem}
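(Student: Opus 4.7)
My plan is to verify hyperconvexity of $X$, which by Aronszajn--Panitchpakdi is equivalent to injectivity. Given a family $\{B(x_i,r_i)\}_{i\in I}$ of closed balls in $X$ satisfying the pairwise condition $d(x_i,x_j)\le r_i+r_j$, I have to produce a point in $\bigcap_{i\in I}B(x_i,r_i)$.

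For any finite subfamily $I'\subset I$ the argument is immediate: fix $i_0\in I'$ and set $R:=\max_{i\in I'}(d(x_{i_0},x_i)+r_i)<\infty$; by hypothesis $Y_{I'}:=B(x_{i_0},R)$ is injective and hence hyperconvex, and the triangle inequality places each $B(x_i,r_i)$ for $i\in I'$ inside $Y_{I'}$ as a closed ball of the subspace with the same pairwise data. Hyperconvexity of $Y_{I'}$ then yields a common point; in particular the lemma is already proved whenever $X$ is bounded. For the general family I fix any $i_0\in I$, set $Y:=B(x_{i_0},r_{i_0})$, note that any common point must lie in $Y$, and consider the closed subsets $D_i:=B(x_i,r_i)\cap Y$ of the injective ball $Y$. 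By the finite case, the family $\{D_i\}_{i\in I}$ has the finite intersection property in $Y$.

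The main obstacle, and the actual content of the lemma, is that when $x_i\notin Y$ the set $D_i$ is typically not a closed ball of the subspace $Y$, so the hyperconvexity of $Y$ cannot be applied to the $D_i$ directly. My plan is to use injectivity of $Y$ applied to the inclusion $Y\cup\{x_i\}_{i\in I}\hookrightarrow X$, combined with hyperconvexity of an auxiliary larger injective ball $B(x_{i_0},R')$ of $X$ containing all the $x_i$ together with $Y$, to choose proxy centres $\tilde x_i\in Y$ that are nearest-point projections of $x_i$ to $Y$ along geodesics through $x_{i_0}$. Setting $s_i:=r_i-d(\tilde x_i,x_i)\ge0$ gives $B_Y(\tilde x_i,s_i)\subset D_i$ by the triangle inequality, and the geodesic alignment forces the tight triangle equality $d(\tilde x_i,\tilde x_j)+d(\tilde x_i,x_i)+d(\tilde x_j,x_j)=d(x_i,x_j)\le r_i+r_j$, which immediately rearranges to the pairwise inequality $d(\tilde x_i,\tilde x_j)\le s_i+s_j$. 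Hyperconvexity of $Y$ applied to $\{B_Y(\tilde x_i,s_i)\}_{i\in I}$ then delivers $y^*\in\bigcap_i D_i$. The hardest step is realising this coordination: the tight-triangle equality for all pairs requires that the proxies be chosen simultaneously on compatible geodesics, which uses the full strength of injectivity of every ball of $X$ rather than just a single fixed ball.
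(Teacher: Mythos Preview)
Your proposed tight-triangle equality
\[
d(\tilde x_i,\tilde x_j)+d(\tilde x_i,x_i)+d(\tilde x_j,x_j)=d(x_i,x_j)
\]
cannot be arranged, no matter how the geodesics through $x_{i_0}$ are coordinated. Take $X=(\R^2,\|\cdot\|_\infty)$, where every closed ball is injective, and set $x_{i_0}=(0,0)$, $r_{i_0}=1$, $x_1=(3,0)$, $x_2=(0,3)$. Any point on a geodesic from the origin to $x_1$ at distance $1$ from the origin has the form $(1,b)$ with $|b|\le 1$, and then $d(\tilde x_1,x_1)=\max(2,|b|)=2$; symmetrically $d(\tilde x_2,x_2)=2$. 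Hence $d(\tilde x_1,x_1)+d(\tilde x_2,x_2)=4>3=d(x_1,x_2)$ for \emph{every} admissible choice, so the equality is impossible and your derivation of $d(\tilde x_i,\tilde x_j)\le s_i+s_j$ collapses.

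More structurally, the existence of proxies $\tilde x_i\in Y$ with $s_i:=r_i-d(\tilde x_i,x_i)\ge 0$ and $d(\tilde x_i,\tilde x_j)\le s_i+s_j$ is \emph{equivalent} to the conclusion you are after: if some $y^*\in\bigcap_i D_i$ exists you may take every $\tilde x_i=y^*$, and conversely hyperconvexity of $Y$ produces such a $y^*$. So the proxy reformulation is circular unless you supply an independent construction, and the geodesic-through-$x_{i_0}$ construction does not deliver one. (Your opening remark about the finite intersection property is also inconclusive on its own, since $Y$ need not be compact.)

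The paper avoids this entirely by keeping the sets $A_i=B(x_i,r_i)\cap B(x_{i_0},r_{i_0})$ as they are rather than shrinking them to balls of $Y$. Using that arbitrarily large balls of $X$ are hyperconvex, each $A_i$ is shown to be \emph{externally hyperconvex} in the hyperconvex ball $A_{i_0}$, and $A_i\cap A_j\neq\emptyset$ for all $i,j$; a Helly-type theorem for externally hyperconvex subsets (\cite[Proposition~1.2]{Mie}) then gives $\bigcap_i A_i\neq\emptyset$ directly.
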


\begin{proof}
	Let $\{ B(x_i,r_i) \}_{i\in I}$ be a family of closed balls with $d(x_i,x_j) \leq r_i + r_j$. Fix some $i_0 \in I$ and set $A_i := B(x_i,r_i) \cap B(x_{i_0},r_{i_0})$. Since, for $r$ big enough, we have $x_i,x_j \in B(x_{i_0},r)$, we get that the $A_i$'s are externally hyperconvex in $A_{i_0}$ and $A_i \cap A_j \neq \emptyset$ for all $i,j \in I$. Hence, it follows
$$\bigcap_{i \in I} B(x_i,r_i) = \bigcap_{i \in I} A_i \neq \emptyset$$
by \cite[Proposition 1.2]{Mie}.
\end{proof}

\begin{Prop}\label{prop:uniformly locally injective with bicombing}
	Let $X$ be a uniformly locally injective metric space with a reversible, convex geodesic bicombing $\sigma$. Then $X$ is injective.
\end{Prop}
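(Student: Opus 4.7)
The plan is to verify that $X$ is hyperconvex: given any family of closed balls $\{B(x_i,r_i)\}_{i\in I}$ with $d(x_i,x_j)\leq r_i+r_j$ for all $i,j$, I will produce a point in $\bigcap_i B(x_i,r_i)$. The core idea is to use the bicombing $\sigma$ to contract the family to a scale at which uniform local injectivity applies, and then transport a common point back to full scale.

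Fix a basepoint $p\in X$ and, for $t\in[0,1]$, define the rescaled family
$$\mathcal{F}_t := \{B(\sigma_{px_i}(t),\,tr_i)\}_{i\in I}.$$
Convexity of $\sigma$ yields $d(\sigma_{px_i}(t),\sigma_{px_j}(t))\leq t\,d(x_i,x_j)\leq t(r_i+r_j)$, so $\mathcal{F}_t$ remains pairwise intersecting for every $t\in[0,1]$. Moreover $\bigcap\mathcal{F}_0=\{p\}$, while $\mathcal{F}_1$ is the original family. Setting
$$T := \{t\in[0,1] : \bigcap\mathcal{F}_t\neq\emptyset\},$$
it suffices to show $T$ is non-empty, relatively open, and closed in $[0,1]$; then $T=[0,1]$ and $1\in T$ produces the desired point.

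Openness is the main step. Given $t_0\in T$ with $y\in\bigcap\mathcal{F}_{t_0}$, and writing $r_0$ for the uniform injectivity radius, the ball $B(y,r_0)$ is injective. For small $\delta>0$ I would apply hyperconvexity of $B(y,r_0)$ to the admissible sets $A_i := B(\sigma_{px_i}(t_0+\delta),(t_0+\delta)r_i)\cap B(y,r_0)$. Each $A_i$ is non-empty because
$$d(y,\sigma_{px_i}(t_0+\delta))\leq d(y,\sigma_{px_i}(t_0))+\delta\,d(p,x_i)\leq t_0r_i+\delta\,d(p,x_i),$$
and this is $\leq (t_0+\delta)r_i+r_0$ for $\delta$ small enough. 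Each $A_i$ is $\sigma$-convex (closed balls are, since $d(\sigma_{vw}(s),z)\leq(1-s)d(v,z)+sd(w,z)$ by convexity of $\sigma$ applied with the constant geodesic at $z$), and the family $\{A_i\}$ inherits pairwise intersection (by taking the appropriate dividing point on the $\sigma$-geodesic between two centers, which lies in $B(y,r_0)$). Hyperconvexity of $B(y,r_0)$ then yields a common point $y'\in\bigcap_i A_i$, so $t_0+\delta\in T$. Closedness of $T$ follows from completeness of $X$ combined with the Lipschitz dependence of $\sigma_{px_i}(t)$ on $t$.

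The main obstacle I anticipate is uniformity in the openness step when the family is unbounded, since the constraint $\delta\,d(p,x_i)\leq r_0$ depends on $i$. The natural remedy is to reduce first to bounded subfamilies by fixing some $i_0\in I$ and replacing each $B(x_i,r_i)$ by $B(x_i,r_i)\cap B(x_{i_0},r_{i_0})$; any prospective common point must lie in $B(x_{i_0},r_{i_0})$, so one runs the argument inside this bounded, $\sigma$-convex ball where $\sup_i d(p,x_i)$ is controlled. Reversibility of $\sigma$ enters to ensure the restricted bicombing is itself a reversible convex bicombing and, more subtly, that bicombing midpoints used in constructing intersection points behave symmetrically between their endpoints, so the same estimates apply uniformly throughout the extension procedure.
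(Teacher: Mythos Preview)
Your continuity scheme is a genuinely different idea from the paper's, but the openness step does not go through as written. Hyperconvexity of $B(y,r_0)$ is a statement about families of balls \emph{with centres in $B(y,r_0)$}; it says nothing about the sets $A_i = B(\sigma_{px_i}(t_0+\delta),(t_0+\delta)r_i)\cap B(y,r_0)$, whose defining centres can lie far outside $B(y,r_0)$ (indeed $d(y,\sigma_{px_i}(t_0+\delta))$ is of order $t_0 r_i$, not $r_0$). Calling the $A_i$ ``$\sigma$-convex'' does not help: there is no Helly-type theorem for pairwise intersecting $\sigma$-convex subsets of an injective ball. Even your pairwise intersection claim fails for the same reason --- the dividing point on $\sigma_{c_ic_j}$ has no reason to fall into $B(y,r_0)$, since by convexity its distance to $y$ is controlled only by $(1-s)d(y,c_i)+s\,d(y,c_j)$, which is again of order $t_0 r_i$. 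Your closedness step is likewise unsupported: without compactness there is no mechanism forcing the common points $y_{t_n}$ to accumulate, and ``Lipschitz dependence of $\sigma_{px_i}(t)$'' does not produce one. Finally, your account of where reversibility enters is too vague to be checked; in your scheme nothing visibly uses $\sigma_{xy}(1-t)=\sigma_{yx}(t)$.

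The paper avoids all of this with a short doubling argument. One proves the property $\mathsf{P}(R)$ (``any pairwise intersecting family with all radii $\le R$ has nonempty total intersection'') and shows $\mathsf{P}(R)\Rightarrow \mathsf{P}(2R)$: given $\{B(x_i,r_i)\}$ with $r_i\le 2R$, set $y_{ij}=\sigma_{x_ix_j}(\tfrac12)$, use convexity to get $d(y_{ij},y_{ik})\le \tfrac{r_j}{2}+\tfrac{r_k}{2}$, apply $\mathsf{P}(R)$ to obtain $z_i\in\bigcap_j B(y_{ij},\tfrac{r_j}{2})$, and then --- here is where reversibility is used, via $y_{ij}=y_{ji}$ --- conclude $d(z_i,z_j)\le \tfrac{r_i}{2}+\tfrac{r_j}{2}$, so a second application of $\mathsf{P}(R)$ yields $x\in\bigcap_i B(z_i,\tfrac{r_i}{2})\subset\bigcap_i B(x_i,r_i)$. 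This disposes of bounded families directly, and one finishes by observing that $\mathsf{P}(2r)$ forces every ball $B(x,r)$ to be injective, whence $X$ is injective by Lemma~\ref{lem:injective balls}.
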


\begin{proof}
		Consider the following property:
\begin{tabbing}
$\mathsf{P}(R)$: \= For every family $\{ B(x_i,r_i) \}_{i\in I}$ with $d(x_i,x_j) \leq r_i+r_j$ and $r_i \leq R$, there is \\
	\> some $x \in \bigcap_{i\in I} B(x_i,r_i)$.
\end{tabbing} 
	
	Since $X$ is uniformly locally injective, this clearly holds for some $R_0 > 0$. Next, we show $\mathsf{P}(R) \Rightarrow \mathsf{P}(2R)$ and therefore $\mathsf{P}(R)$ holds for any $R \geq 0$.
	
	Let $\{ B(x_i,r_i) \}_{i\in I}$ be a family of closed balls with $d(x_i,x_j) \leq r_i+r_j$ and $r_i \leq 2R$. For $i,j \in I$, define $y_{ij} := \sigma_{x_i x_j}(\frac{1}{2})$. By convexity of $\sigma$, we have $$d(y_{ij},y_{ik}) = d(\sigma_{x_i x_j}(\tfrac{1}{2}),\sigma_{x_i x_k}(\tfrac{1}{2})) \leq \tfrac{1}{2}d(x_j,x_k) \leq \tfrac{r_j}{2}+ \tfrac{r_k}{2}.$$
	Hence, for every $i\in I$, there is some $z_i \in \bigcap_{j \in I} B(y_{ij}, \frac{r_j}{2})$. Now, observe that $d(z_i,z_j) \leq d(z_i,y_{ij}) + d(y_{ij},z_j) \leq \frac{r_i}{2}+\frac{r_j}{2}$ and therefore, we find
	$$x \in \bigcap_{i \in I} B(z_i,\tfrac{r_i}{2}) \subset \bigcap_{i \in I} B(x_i,r_i).$$
	
	Since all balls with center in $B(x,r)$ and radius larger than $2r$ contain $B(x,r)$, $\mathsf{P}(R)$ for $R =2r$ implies that $B(x,r)$ is injective. Hence, by Lemma~\ref{lem:injective balls}, $X$ is injective.
\end{proof}

Since compact, locally injective metric spaces are always uniformly locally injective we conclude the following.

\begin{Cor}\label{cor:compact locally injective with bicombing}
	Let $X$ be a compact, locally injective metric space with a reversible, convex geodesic bicombing $\sigma$. Then $X$ is injective.
\end{Cor}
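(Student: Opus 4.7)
The plan is to reduce the corollary directly to Proposition~\ref{prop:uniformly locally injective with bicombing} by upgrading ``locally injective'' to ``uniformly locally injective'' using compactness of $X$. The hypothesis on $\sigma$ carries over unchanged, so once uniform local injectivity is established, the proposition delivers the conclusion immediately.

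For the upgrade, I would argue as follows. By local injectivity, for each $x \in X$ there is some $r_x > 0$ such that the closed ball $B(x,r_x)$ is injective. The collection $\{ U(x,r_x/2) \}_{x \in X}$ is an open cover of the compact space $X$, so we can extract a finite subcover $\{ U(x_k, r_{x_k}/2) \}_{k=1}^n$. Set
\[
  r := \tfrac{1}{2} \min_{k=1,\ldots,n} r_{x_k} > 0.
\]
Given any $y \in X$, pick $k$ with $y \in U(x_k, r_{x_k}/2)$; then for every $z \in B(y,r)$ one has $d(x_k, z) \leq d(x_k, y) + d(y, z) < r_{x_k}/2 + r \leq r_{x_k}$, whence $B(y, r) \subset B(x_k, r_{x_k})$. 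Since $B(y,r)$ coincides with the closed ball of radius $r$ around $y$ in the metric subspace $B(x_k, r_{x_k})$, and since closed balls inside injective (equivalently, hyperconvex) spaces are themselves injective, we conclude that $B(y,r)$ is injective. Hence $X$ is uniformly locally injective with uniform radius $r$.

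With uniform local injectivity in hand, Proposition~\ref{prop:uniformly locally injective with bicombing} applies directly to $(X, \sigma)$ and yields that $X$ is injective. There is no serious obstacle here: the only point requiring a touch of care is verifying that a closed ball $B(y,r)$ taken in $X$ really agrees with the corresponding closed ball computed inside the injective subspace $B(x_k, r_{x_k})$, which is precisely what the inclusion $B(y,r) \subset B(x_k, r_{x_k})$ ensures.
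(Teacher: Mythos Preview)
Your proof is correct and follows exactly the approach the paper takes: the corollary is stated immediately after the remark that compact, locally injective metric spaces are always uniformly locally injective, and then Proposition~\ref{prop:uniformly locally injective with bicombing} applies. Your explicit compactness argument (finite subcover, halving radii, closed balls in hyperconvex spaces are hyperconvex) just fills in the details of that one-line observation.
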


\begin{Cor}\label{cor:proper locally injective with bicombing}
	Let $X$ be a proper, locally injective metric space with a reversible, convex geodesic bicombing $\sigma$. Then $X$ is injective.
\end{Cor}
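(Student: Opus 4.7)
The plan is to mimic the bootstrap from the proof of Proposition~\ref{prop:uniformly locally injective with bicombing} in a localized form: because $X$ is only proper (not uniformly locally injective), I would localize the whole argument inside a sufficiently large compact ball, where compactness upgrades local to uniform local injectivity as in Corollary~\ref{cor:compact locally injective with bicombing}. Properness is used at the outset to reduce a general intersection problem to the finite case by a standard finite-intersection-property trick.

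More precisely, given a family $\{B(x_i,r_i)\}_{i \in I}$ with $d(x_i,x_j) \leq r_i+r_j$, I would fix some $i_0 \in I$ and observe that the sets $B(x_i,r_i) \cap B(x_{i_0},r_{i_0})$ are closed subsets of the compact ball $B(x_{i_0},r_{i_0})$; hence it suffices to show that every finite subfamily $\{B(x_i,r_i)\}_{i=1}^n$ has nonempty intersection. For such a finite family I set $R := \max_i r_i$ and $K := B(x_1,5R)$, compact by properness. Using that closed sub-balls of injective balls are injective, a standard covering argument on the compact set $K$ then produces a number $R_0 > 0$ such that $B(y,2R_0)$ is injective for every $y \in K$. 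I would introduce the auxiliary statement $Q(\rho)$: every finite family $\{B(y_j,s_j)\}_j$ with $y_j \in K$, $s_j \leq \rho$, and $d(y_j,y_k) \leq s_j+s_k$ has nonempty intersection in $X$.

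The base case $Q(R_0)$ is immediate: the compatibility condition forces $d(y_j,y_1) \leq 2R_0$, so all centers lie inside the injective ball $B(y_1,2R_0)$, and hyperconvexity of that ball applied to the closed sub-balls $B(y_j,s_j) \cap B(y_1,2R_0)$ produces a common point. The inductive step $Q(\rho) \Rightarrow Q(2\rho)$ is the midpoint construction from the proof of Proposition~\ref{prop:uniformly locally injective with bicombing} verbatim: set $y_{jk} := \sigma_{y_j y_k}(\tfrac{1}{2})$, apply $Q(\rho)$ to $\{B(y_{jk},s_k/2)\}_k$ for each $j$ to obtain $z_j$, and then apply $Q(\rho)$ to $\{B(z_j,s_j/2)\}_j$. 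After finitely many doublings one reaches $Q(R)$, and applying it to the original finite family finishes the proof.

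The main obstacle is verifying that every intermediate center produced during the recursive bootstrap actually stays inside $K$, so that the recursive applications of $Q$ are legal; this is where the choice of the radius $5R$ is pinned down. A direct geometric estimate settles it: the original centers lie in $B(x_1,2R)$, and when one descends one level of the recursion the new midpoints $y_{jk}$ and auxiliary points $z_j$ are within the current maximum radius of an old center, while this maximum radius halves at each deeper level. Summing the geometric series bounds the total drift from $x_1$ by strictly less than $2R$, so every intermediate center stays within $4R < 5R$ of $x_1$ and hence in $K$, as required.
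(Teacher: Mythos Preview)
Your argument is correct. One small wrinkle worth flagging: the formal inductive statement $Q(\rho)$ as you phrase it (centers anywhere in $K$) does not quite self-improve to $Q(2\rho)$, because while the midpoints $y_{jk}$ stay in $K$ by $\sigma$-convexity of the ball, the auxiliary points $z_j\in B(y_j,s_j/2)$ can land up to $\rho$ outside $K$ when $y_j$ sits near $\partial K$. What actually works---and what your final paragraph correctly establishes---is the specific recursion launched from the original centers in $B(x_1,2R)$, along which the total drift is bounded by the geometric series $R+R/2+R/4+\cdots<2R$, keeping every intermediate center inside $B(x_1,4R)\subset K$. So this is only a matter of how you phrase the induction (e.g.\ let the admissible region for the centers shrink as $\rho$ grows), not a genuine gap.

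The paper's proof takes a shorter and more modular route. Rather than reducing to finite subfamilies and re-running the midpoint doubling by hand, it fixes an index $i_0$, sets $I_n=\{i:d(x_i,x_{i_0})\le n\}$, and invokes the compact case (Corollary~\ref{cor:compact locally injective with bicombing}) as a black box to produce a point $y_n\in\bigcap_{i\in I_n}B(x_i,r_i)$; since every such $y_n$ lies in the compact ball $B(x_{i_0},r_{i_0})$, a convergent subsequence yields the desired common point. Your finite-intersection-property reduction followed by an explicit localized bootstrap reaches the same conclusion in a more self-contained way, and it has the side benefit that you never need to argue that a large closed ball of $X$ itself satisfies the hypotheses of Corollary~\ref{cor:compact locally injective with bicombing}.
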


\begin{proof}
	Let $\{ B(x_i,r_i) \}_{i\in I}$ be a family of balls with $d(x_i,x_j) \leq r_i+r_j$. Fix some $i_0 \in I$ and define $I_n = \{ i \in I : d(x_i,x_{i_0}) \leq n \}$, for $n \in \mathbb{N}$. Since $B(x_{i_0},n)$ is compact, by the previous corollary, there is some $y_n \in \bigcap_{i \in I_n} B(x_i,r_i)$. Especially, $(y_n)_n \subset B(x_{i_0},r_{i_0})$ and hence, there is some converging subsequence $y_{n_k} \to y \in \bigcap_{i \in I} B(x_i,r_i)$. 
\end{proof}

\begin{Rem}
In \cite{Lan}, U. Lang proves that every injective metric space admits a reversible, conical geodesic bicombing (Proposition 3.8). Observe also that this is the only property of the geodesic bicombing used in the proof of Proposition~\ref{prop:uniformly locally injective with bicombing}. Therefore, we get the following equivalence statement (in the terminology of \cite{Lan}): A metric space is injective if and only if it is uniformly locally injective and admits a reversible, conical geodesic bicombing.
\end{Rem}

If an injective metric space $X$ is proper, it also admits a (possibly non-consistent) convex geodesic bicombing \cite[Theorem 1.1]{Des} and if $X$ has finite combinatorial dimension in the sense of A. Dress \cite{Dre}, this convex geodesic bicombing is consistent, reversible and unique \cite[Theorem 1.2]{Des}. In our terms, this is:

\begin{Prop}\label{prop:unique convex bicombing}
	Every proper, injective metric space with finite combinatorial dimension admits a unique reversible, convex geodesic bicombing.
\end{Prop}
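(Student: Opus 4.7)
The plan is to assemble the proposition directly from the two results of Descombes-Lang that are quoted in the paragraph just above the statement. First, I would invoke \cite[Theorem 1.1]{Des}: any proper, injective metric space $X$ admits at least one convex geodesic bicombing $\sigma$. At this stage, the map $\sigma$ is only required to select a geodesic between every pair of points and to satisfy the usual convexity inequality $d(\sigma_{xy}(t),\sigma_{x'y'}(t)) \le (1-t)d(x,x') + t\,d(y,y')$; nothing is yet known about subsegment-consistency in the sense of Definition~\ref{def:local bicombing}(ii), nor about reversibility, nor about uniqueness.

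Second, I would bring in the additional hypothesis of finite combinatorial dimension in the sense of Dress \cite{Dre} and apply \cite[Theorem 1.2]{Des}, which upgrades any such convex geodesic bicombing to a subsegment-consistent, reversible one, and furthermore shows that it is the unique convex geodesic bicombing on $X$. Concatenating the two applications of [Des] then gives existence, reversibility, and uniqueness simultaneously, which is exactly the assertion of the proposition.

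The main point that I would double-check, rather than a genuine mathematical obstacle, is the matching of conventions: one has to verify that the notion of convex geodesic bicombing used in \cite{Des} coincides with the globally defined version of Definition~\ref{def:local bicombing} (taking $U = X \times X$), and that the uniqueness statement of \cite[Theorem 1.2]{Des} is formulated inside the class of convex bicombings \emph{before} any consistency is imposed (so that one is entitled to feed it the possibly non-consistent bicombing produced by \cite[Theorem 1.1]{Des}). Once these bookkeeping issues are settled, the argument is immediate and there is no further computation to perform.
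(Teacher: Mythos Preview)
Your proposal is correct and matches the paper's treatment exactly: the paper does not give an independent proof of this proposition but simply records it as a restatement of \cite[Theorem~1.1]{Des} and \cite[Theorem~1.2]{Des}, precisely as you describe. Your additional remarks about checking the conventions are reasonable due diligence but go slightly beyond what the paper itself spells out.
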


Recall that, by the Hopf-Rinow Theorem, any complete, locally compact length space is proper.

\begin{Cor}\label{cor:convex local bicombing}
	Let $X$ be a locally compact, locally injective metric space with locally finite combinatorial dimension. Then $X$ admits a reversible, convex local geodesic bicombing.
\end{Cor}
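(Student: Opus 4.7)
The plan is to patch together the unique reversible convex geodesic bicombings furnished by Proposition~\ref{prop:unique convex bicombing} on suitably chosen injective balls, using uniqueness to resolve the overlaps. For each $x \in X$, using local compactness, local injectivity, and locally finite combinatorial dimension, I would choose $R_x > 0$ small enough that $B(x, R_x)$ is injective and has finite combinatorial dimension. This closed ball is complete (every injective space is), locally compact (as a closed subset of $X$), and geodesic (injective spaces are geodesic), hence a length space; by the Hopf-Rinow theorem it is proper. Proposition~\ref{prop:unique convex bicombing} then supplies a unique reversible convex geodesic bicombing $\sigma^x$ on $B(x, R_x)$.

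Set $r_x := R_x/3$ and define, for $y, z \in U(x, r_x)$, $\sigma_{yz} := \sigma^x_{yz}$. Convexity of $\sigma^x$ applied with the constant geodesic at $x$ gives $d(\sigma^x_{yz}(t), x) \leq (1-t) d(y, x) + t d(z, x) < r_x$, so $\sigma^x_{yz}$ takes values in $U(x, r_x)$, as required by Definition~\ref{def:local bicombing}~(i). The key step is well-definedness: suppose $y, z \in U(x_0, r_{x_0}) \cap U(x_1, r_{x_1})$ with, after relabeling, $r_{x_1} \leq r_{x_0}$. Then $d(x_0, x_1) < r_{x_0} + r_{x_1} \leq 2 r_{x_0}$, so $B(x_1, r_{x_1}) \subset B(x_0, 3 r_{x_0}) \subset B(x_0, R_{x_0})$. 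Since closed balls in a hyperconvex metric space are hyperconvex, $B(x_1, r_{x_1})$ is itself injective; as a closed subset of the proper space $B(x_0, R_{x_0})$ it is proper, and it inherits finite combinatorial dimension. The restrictions of $\sigma^{x_0}$ and $\sigma^{x_1}$ to $B(x_1, r_{x_1})$ stay inside $B(x_1, r_{x_1})$ by the same convexity argument applied with the constant geodesic at $x_1$, and they are both reversible convex geodesic bicombings on this proper, injective, finite combinatorial dimensional space. Proposition~\ref{prop:unique convex bicombing} forces them to agree, so in particular $\sigma^{x_0}_{yz} = \sigma^{x_1}_{yz}$.

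It remains to verify the remaining conditions. Condition~(ii) of Definition~\ref{def:local bicombing} follows from the corresponding property of each $\sigma^x$: for $0 \leq a \leq b \leq 1$ the points $\sigma_{yz}(a), \sigma_{yz}(b)$ lie in $U(x, r_x)$, and consistency of $\sigma^x$ reparametrizes the subsegment as $\sigma^x_{\sigma^x_{yz}(a) \sigma^x_{yz}(b)}(t) = \sigma^x_{yz}((1-t)a + tb)$, which by definition equals $\sigma_{\sigma_{yz}(a) \sigma_{yz}(b)}(t)$. Local convexity and reversibility of $\sigma$ are inherited pointwise from each $\sigma^x$. The main obstacle, and where the argument requires a small trick, is the well-definedness step: rather than attempting to invoke uniqueness on the intersection $B(x_0, R_{x_0}) \cap B(x_1, R_{x_1})$, which need not be injective, one asymmetrically traps the smaller ball $B(x_1, r_{x_1})$ inside the larger injective ball $B(x_0, R_{x_0})$ and then uses that closed balls in injective spaces are themselves injective.
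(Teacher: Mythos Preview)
Your proof is correct and follows essentially the same strategy as the paper's: choose radii with a factor of $3$, apply Proposition~\ref{prop:unique convex bicombing} on injective balls, and use uniqueness to show the local bicombings agree on overlaps. The only minor difference is in the well-definedness step: the paper argues ``upward'' by showing that both $\sigma^{x_0}$ and $\sigma^{x_1}$ are restrictions of the unique bicombing on the larger ball $B(x_0,3r_{x_0})$, whereas you argue ``downward'' by invoking uniqueness on the smaller ball $B(x_1,r_{x_1})$ (using that closed balls in hyperconvex spaces are hyperconvex). Both variants work and are equally direct.
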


\begin{proof}
	For every $x \in X$, there is some $r_x > 0$ such that $B(x,3r_x)$ is compact, injective and has finite combinatorial dimension. This also holds for $B(x,r_x)$ and therefore, there is a reversible, convex geodesic bicombing $\sigma^x$ on $B(x,r_x)$.
	
	We will check that for $B(x,r_x)$ and $B(y,r_y)$ with $B(x,r_x)\cap B(y,r_y) \neq \emptyset$ the two geodesic bicombings $\sigma^x, \sigma^y$ coincide on the intersection. Assume without loss of generality that $r_x \geq r_y$ and hence $B(x,r_x),B(y,r_y) \subset B(x,3r_x)$. Then the convex geodesic bicombing $\tau$ on $B(x,3r_x)$ restricts to both $B(x,r_x)$ and $B(y,r_y)$ since, for $p,q \in B(z,r_z)$, we have
	$d(z,\tau_{pq}(t)) \leq (1-t)d(z,p) + t d(z,q) \leq r_z.$
	Hence, by uniqueness, the geodesic bicombings $\sigma^x, \sigma^y$ are both restrictions of $\tau$ and thus coincide on $B(x,r_x)\cap B(y,r_y)$.
	
	Therefore $\sigma$, defined by $\sigma|_{B(x,r_x) \times B(x,r_x)} := \sigma^x|_{B(x,r_x) \times B(x,r_x)}$, is a reversible, convex local geodesic bicombing on $X$.
\end{proof}

\begin{proof}[Proof of Theorem~\ref{thm:local injective metric spaces}.]
	Let $X$ be a complete, locally compact, simply-connected, locally injective length space with locally finite combinatorial dimension. By Corollary~\ref{cor:convex local bicombing}, $X$ has a reversible, convex local geodesic bicombing, which induces a reversible, convex geodesic bicombing by Theorem~\ref{thm:convex bicombing}. Hence, we can apply Corollary~\ref{cor:proper locally injective with bicombing} and deduce that $X$ is injective.
\end{proof}


\section{Absolute 1-Lipschitz Neighborhood Retracts} \label{sec:absolute 1-lipschitz neighborhood retracts}


A metric space $X$ is an \emph{absolute $1$-Lipschitz neighborhood retract} if, for every metric space $Y$ with $X \subset Y$, there is some neighborhood $U$ of $X$ in $Y$ and a $1$-Lipschitz retraction $\rho \colon U \to X$.
Furthermore, if we can take $U=U(X,\epsilon)$ for some $\epsilon > 0$, we call $X$ an \emph{absolute $1$-Lipschitz uniform neighborhood retract}. In this case, $\epsilon$ can be chosen independent of $Y$; see \cite[Proposition 7.78]{Haj}.

\begin{Lem}\label{lem:locally injective}
	Let $X$ be an absolute $1$-Lipschitz (uniform) neighborhood retract. Then $X$ is (uniformly) locally injective.
\end{Lem}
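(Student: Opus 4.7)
The plan is to embed $X$ isometrically into the injective space $\ell^\infty(X)$ via the Kuratowski map, apply the absolute (uniform) $1$-Lipschitz neighborhood retract property in this ambient space to obtain a $1$-Lipschitz retraction $\rho$ onto $X$, and then establish hyperconvexity of suitably small closed balls $B(x_0, r_{x_0})$ by intersecting an augmented family of balls inside $\ell^\infty(X)$ and pushing the common point back to $X$ via $\rho$. Since injectivity is equivalent to hyperconvexity (Aronszajn--Panitchpakdi, as recalled at the beginning of Section~\ref{sec:locally injective}), this is enough to prove (uniform) local injectivity.

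First I would recall that the Kuratowski embedding $X \hookrightarrow \ell^\infty(X)$ is isometric and that $\ell^\infty(X)$ is hyperconvex. Applying the absolute $1$-Lipschitz neighborhood retract property to this particular embedding produces a $1$-Lipschitz retraction $\rho \colon U \to X$ on some neighborhood $U$ of $X$ in $\ell^\infty(X)$. In the uniform case one may take $U = U(X,\eps)$ for some $\eps > 0$ independent of the embedding, so that for every $x \in X$ the closed ball $B_{\ell^\infty(X)}(x,\eps/2)$ is contained in $U$; in the non-uniform case, for each $x \in X$ one picks $r_x > 0$ with $B_{\ell^\infty(X)}(x, r_x) \sub U$. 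Fix $x_0 \in X$ and let $r_{x_0}$ denote this radius (uniformly equal to $\eps/2$ in the uniform case).

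To verify that $B(x_0, r_{x_0})$ is injective, I would check hyperconvexity: given $\{y_i\}_{i \in I} \sub B(x_0, r_{x_0})$ and non-negative $\{s_i\}_{i \in I}$ satisfying $d(y_i, y_j) \leq s_i + s_j$ for all $i,j$, I must produce $z \in B(x_0, r_{x_0})$ with $d(z, y_i) \leq s_i$ for each $i$. The key step is to apply hyperconvexity of $\ell^\infty(X)$ to the augmented family
$$\{B_{\ell^\infty(X)}(y_i, s_i)\}_{i \in I} \cup \{B_{\ell^\infty(X)}(x_0, r_{x_0})\},$$
whose pairwise-distance conditions hold by hypothesis for pairs $(y_i, y_j)$, and because $d(x_0, y_i) \leq r_{x_0} \leq r_{x_0} + s_i$ for the pairs involving the extra ball. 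The resulting common point $z^* \in \ell^\infty(X)$ automatically lies in $B_{\ell^\infty(X)}(x_0, r_{x_0}) \sub U$, so $\rho(z^*) \in X$ is defined, and the $1$-Lipschitz property combined with $\rho(x_0) = x_0$ and $\rho(y_i) = y_i$ yields $d(\rho(z^*), x_0) \leq r_{x_0}$ and $d(\rho(z^*), y_i) \leq s_i$, exhibiting $\rho(z^*)$ as the desired common point.

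The main obstacle---and the step that dictates the whole strategy---is ensuring simultaneously that $z^*$ lands in the retraction neighborhood $U$ and that $\rho(z^*)$ lands back inside $B(x_0, r_{x_0})$. Without the extra ball $B_{\ell^\infty(X)}(x_0, r_{x_0})$ in the family, $z^*$ could be far from $X$ (so that $\rho$ need not be applicable) or its image could drift outside the target ball, since $X$ need not carry a canonical projection onto $B(x_0, r_{x_0})$; adding this one ball is exactly what reconciles both constraints.
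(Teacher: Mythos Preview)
Your proof is correct and follows essentially the same approach as the paper: embed $X$ in $\ell^\infty(X)$, obtain the $1$-Lipschitz retraction $\rho$ on a neighborhood $U$, choose $r_{x_0}$ (respectively $\eps/2$) so that the closed ball lies in $U$, augment the given family of balls by $B(x_0,r_{x_0})$, intersect in $\ell^\infty(X)$, and retract. Your write-up is in fact more explicit than the paper's about why the augmented family still satisfies the pairwise inequalities and why the extra ball is needed to keep both $z^*$ in $U$ and $\rho(z^*)$ in $B(x_0,r_{x_0})$.
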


\begin{proof}
	Consider $X \subset l_\infty(X)$. Since $X$ is an absolute $1$-lipschitz neighborhood retract, there is some neighborhood $U$ of $X$ and a 1-Lipschitz retraction $\rho \colon U \to X$. For $x \in X$, there is some $r_x > 0$ such that $B(x,r_x) \subset U$. Let now $\{B(x_i,r_i)\}_{i \in I}$ be a family of closed balls with $x_i \in B(x,r_x)\cap X$ and $d(x_i,x_j) \leq r_i + r_j$. Then, since $l_\infty(X)$ is injective, there is some $y \in B(x,r_x) \cap \bigcap_{i \in I} B(x_i,r_i) \subset U$. Hence, we have $\rho(y) \in B(x,r_x) \cap \bigcap_{i \in I} B(x_i,r_i) \cap X$ and therefore $B(x,r_x) \cap X$ is injective.
	
	If $X$ is an absolute $1$-Lipschitz uniform neighborhood retract, we have $U=U(X,\epsilon)$ for some $\epsilon > 0$ and therefore, we can choose $r_x = \frac{\epsilon}{2}$ for all $x \in X$.
\end{proof}
The converse is not true, as the following example shows.

\begin{Expl}\label{expl:sphere}
	Consider the unit sphere $S^1$ endowed with the inner metric. Since, for every $x \in S^1$ and $\epsilon \in (0,\frac{\pi}{2}]$, the ball $B(x,\epsilon)$ is isometric to the interval $[-\epsilon,\epsilon]$, the unit sphere $S^1$ is uniformly locally injective.
	
	But $S^1$ is not an absolute $1$-Lipschitz neighborhood retract. Fix some inclusion $S^1 \subset l_\infty(S^1)$. We choose three points $x,y,z \in S^1$ with $r:=d(x,y)=d(x,z)=d(y,z)= \frac{2 \pi}{3}$. Let $U$ be a neighborhood of $S^1$ in $l_\infty(S^1)$. As $U$ is open, there is some $\epsilon \in (0,\frac{r}{2})$ such that $B(x,\epsilon) \subset U$. By hyperconvexity of $l_\infty(S^1)$, there is some $$p \in B(x,\epsilon) \cap B(y,r-\epsilon) \cap B(z,r-\epsilon) \subset U.$$ But since $$B(x,\epsilon) \cap B(y,r-\epsilon) \cap B(z,r-\epsilon)\cap S^1 = \emptyset,$$ there is no $1$-Lipschitz retraction $\rho \colon S^1 \cup \{p\} \to S^1$.
\end{Expl}

In fact, the notion of an absolute $1$-Lipschitz uniform neighborhood retract is quite restrictive.

\begin{Lem}\label{lem:geodesic}
	Let $X$ be an absolute $1$-Lipschitz uniform neighborhood retract. Then $X$ is
\begin{enumerate}[(i)]
	\item complete,
	\item geodesic, especially a length space, and
	\item simply-connected.
\end{enumerate}	
\end{Lem}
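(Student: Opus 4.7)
I embed $X \hookrightarrow l_\infty(X)$ isometrically via the Kuratowski map and use the $1$-Lipschitz retraction $\rho \colon U(X, \eps) \to X$ from a uniform $\eps$-neighborhood, with $\eps$ independent of the ambient space by \cite[Proposition 7.78]{Haj}. For (i), given a Cauchy sequence $(x_n) \subset X$, I pass to the metric completion $\bar X$; since $X$ is dense in $\bar X$, the $\eps$-neighborhood of $X$ in $\bar X$ is all of $\bar X$, yielding a $1$-Lipschitz retraction $\bar\rho \colon \bar X \to X$, and continuity forces $x_n = \bar\rho(x_n) \to \bar\rho(\lim_{\bar X} x_n) \in X$.

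For (ii), I first treat $x, y \in X$ with $d(x, y) < 2\eps$: the $l_\infty$-segment $s \mapsto (1-s)x + sy$ has distance at most $\min(s, 1-s)\, d(x, y) < \eps$ from $X$, so $\sigma_{xy}(s) := \rho((1-s)x + sy)$ is a $1$-Lipschitz curve from $x$ to $y$ of length at most $d(x,y)$, hence a geodesic. For arbitrary $x, y$ at distance $d$, I pick $N$ with $d/N < \eps$ and iterate $z_0 := x$, $z_{k+1} := \rho\bigl(z_k + \tfrac{1}{N-k}(y - z_k)\bigr)$; using $1$-Lipschitzness of $\rho$ and the triangle inequality, a short induction gives $d(z_k, z_{k+1}) = d/N$ and $d(z_k, y) = (N-k)\,d/N$, so $z_0, \ldots, z_N = y$ lie on a discrete geodesic, and concatenating the short-range geodesics between consecutive $z_k$'s produces a geodesic from $x$ to $y$ in $X$.

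For (iii), Lemma~\ref{lem:locally injective} supplies a uniform radius $r_0 > 0$ such that every closed ball $B(x, 2 r_0) \subset X$ is injective (hence contractible) and $\sigma_{xy}(s) := \rho((1-s)x + sy)$ restricts to a convex geodesic bicombing on each. Given a loop $\gamma \colon S^1 \to X$, uniform continuity yields a subdivision $0 = \theta_0 < \cdots < \theta_N = 2\pi$ with $\diam(\gamma([\theta_{k-1}, \theta_k])) < r_0/2$; inside the contractible ball $B(v_{k-1}, r_0)$ (where $v_k := \gamma(\theta_k)$), each arc is homotopic rel endpoints to the chord $\sigma_{v_{k-1} v_k}$, so $\gamma$ is homotopic to a geodesic polygon $\tilde\gamma$. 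To null-homotope $\tilde\gamma$, I triangulate a bounding disc by joining each $v_k$ to $v_0$ via an iterated retraction path from (ii) and recursively bisect edges (midpoints exist by (ii)) until every triangle has vertices pairwise within $r_0$; each small triangle then lies in a contractible injective ball where the local convex bicombing yields the canonical continuous cone filling $h(s, t) = \sigma_{a, \sigma_{bc}(t)}(s)$, and shared edges carry the same $\sigma$-chord on both sides, so the local fillings assemble into a continuous null-homotopy.

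The main obstacle is (iii): the retraction-$\sigma$-bicombing is only local, so $\tilde\gamma$ cannot be collapsed by a single cone, and the null-homotopy must be assembled triangle-by-triangle over a triangulation refined far enough that every piece fits into a contractible injective ball. Uniform local injectivity is exactly what guarantees the recursive subdivision terminates and that local cone fillings glue along shared edges. Parts (i) and (ii) by contrast reduce to direct manipulations of $\rho$ on affine segments in $l_\infty(X)$.
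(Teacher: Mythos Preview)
Your arguments for (i) and (ii) are correct and essentially parallel the paper's, with a slightly different organization in (ii): the paper runs an induction on the \emph{range} of distances for which geodesics exist (``if geodesics exist for $d(x,y)<d$, then also for $d(x,y)<d+r$''), while you construct the intermediate points $z_k$ in one sweep. Both work for the same reason, namely that $\rho$ is $1$-Lipschitz and fixes $X$, so the triangle inequality forces all your ``$\leq$'' estimates to be equalities.

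Part (iii), however, has a genuine gap. Your plan is to fan-triangulate the polygon from $v_0$ and then ``recursively bisect edges until every triangle has vertices pairwise within $r_0$''. The problem is that in a general metric space edge-bisection does \emph{not} shrink triangles: if $m_{ab},m_{ac}$ are midpoints of two long edges of a triangle $abc$, there is no bound on $d(m_{ab},m_{ac})$ in terms of $d(b,c)$ unless you already have a \emph{global} conical/convex estimate. Your retraction-bicombing $\sigma_{xy}(s)=\rho((1-s)x+sy)$ is only defined (and conical) for $d(x,y)<2\eps$; the long geodesics from (ii) are concatenations of such short pieces and satisfy no midpoint inequality. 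So the recursion need not terminate, and the small-triangle fillings never get a chance to be applied. (A minor side remark: this $\sigma$ is conical but not consistent, hence not ``convex'' in the paper's sense; that would not matter for coning small triangles, but it does mean you cannot invoke convexity to control subdivisions either.)

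The paper avoids this issue entirely with a different idea: given a loop $\gamma$ of length $2\pi R$, parametrize it by arclength on a Euclidean circle of radius $R$, extend $1$-Lipschitzly to the annulus $\{R-r\le\|x\|\le R\}$ inside $l_\infty(X)$ (possible since $l_\infty(X)$ is injective), and then apply $\rho$ to the image of the inner circle. This produces a loop homotopic to $\gamma$ of length at most $2\pi(R-r)=L(\gamma)-2\pi r$, so after finitely many steps the loop is short enough to live in a single injective ball and contract. The key point is that the $1$-Lipschitz extension into $l_\infty(X)$ does the global work for you; no combinatorial subdivision is needed.
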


\begin{proof}
	Fix some inclusion $X\subset l_\infty(X)$ and $r=\frac{\epsilon}{2}>0$ such that there is a 1-Lipschitz retraction $\rho \colon U(X,\epsilon) \to X.$ 
	
	First, if $(x_n)_{n \in \mathbb{N}}$ is a Cauchy sequence in $X$, it converges to some $x \in U(X,\epsilon)$. It follows that $x=\rho(x) \in X$.
	
	Next, assume that there is a geodesic in $X$ between points at distance less than $d$. By Lemma~\ref{lem:locally injective}, this is clearly true for $d=r$. Consider two points $x,y \in X$ with $d(x,y) \leq d+r$. Now, since $l_\infty(X)$ is geodesic, there is some $z \in l_\infty(X)$ with $d(x,y) = d(x,z)+d(z,y)$, $d(x,z) \leq r$ and $d(z,y)\leq d$. But then, we have $\rho(z) \in X$ with $d(x,y) = d(x, \rho(z)) + d(\rho(z),y)$ and, by our hypothesis, there are geodesics from $x$ to $\rho(z)$ and from $\rho(z)$ to $y$ which combine to a geodesic from $x$ to $y$.
	
	Finally, since $X$ is locally simply-connected, every curve is homotopic to a curve of finite length and hence, it is enough to consider loops of finite length. We show that every such loop in $X$ is homotopic to a strictly shorter one and therefore, every loop is contractible.
	
	Let $\gamma$ be a loop in $X$ of length $L(\gamma) = 2 \pi R$ with $R > r$ and let $A = \{x \in \mathbb{R}^2 : R-r \leq \|x\| \leq R \}$ be the annulus bounded by the two circles $c = \{x \in \mathbb{R}^2 : \|x\| = R \}$ and $c' = \{x \in \mathbb{R}^2 : \|x\| = R-r \}$. Let $f$ be an isometry from $c$ onto $\gamma$ and $\bar{f} \colon A \to l_\infty(X)$ be a $1$-Lipschitz extension. Then $\gamma' = \rho \circ \bar{f}(c')$ is a loop of length $L(\gamma') \leq L(\gamma)-2 \pi r$ which is homotopic to $\gamma$. If $L(\gamma) \leq 2 \pi r$, we can use the same argument with $A$ replaced by the disk of perimeter $L(\gamma)$ to show that $\gamma$ is contractible. 
\end{proof}

We conclude that an absolute $1$-Lipschitz uniform neighborhood retract is a complete, simply-connected, locally injective length space and therefore Theorem~\ref{thm:absolute 1-lip nbhd retracts} follows.

\textbf{Acknowledgments.} I would like to thank Prof. Dr. Urs Lang for helpful remarks on this work and Dr. Ma\"el Pav\'on for inspiring discussions. I am also grateful for Giuliano Basso's comments related to Theorem~\ref{thm:convex bicombing}. The author was supported by the Swiss National Science Foundation.



\end{document}